\newif\ifcolorcomments
\newcommand{\allowcomments}[4]{
\newcommand{#1}[1]{\ifdraft{\ifcolorcomments{\textcolor{#4}{##1 --#3}}\else{\textsl{ ##1 \ --#3}}\fi}\else{}\fi}
}
\allowcomments{\commumtaz}{MH}{Mumtaz}{green}
\allowcomments{\comwang}{BW}{BWWang}{blue}
\allowcomments{\comkle}{DK}{DK}{magenta}
\allowcomments{\comnick}{NW}{Nick}{red}
\def\bc{\begin{center}}
\def\ec{\end{center}}
\def\be{\begin{equation}}
\def\ee{\end{equation}}
\def\N{\mathbb N}
\def\R{\mathbb R}
\def\LL{\mathcal L}
\newtheorem{lem}{Lemma}[section]
\newtheorem{proposition}[lem]{Proposition}
\newtheorem{theorem}[lem]{Theorem}
\newtheorem{lemma}[lem]{Lemma}
\newtheorem{corollary}[lem]{Corollary}
\theoremstyle{remark}
\newtheorem{remark}[lem]{\bf Remark}
\numberwithin{equation}{section}
\newif\ifdraft\drafttrue
\begin{document}

\subjclass[2010]  {}

\title{Dynamical Borel-Cantelli lemma for recurrence theory}

\author[M. Hussain]{Mumtaz Hussain}
\address{Mumtaz Hussain, La Trobe University, POBox199, Bendigo 3552, Australia. }\email{M.Hussain@latrobe.edu.au}
\author[B. Li ]{Bing  Li}
\address {Bing Li, School of mathematics, South China University of Technology, Guangzhou, 510640, China.} \email{scbingli@scut.edu.cn}
\author[D. Simmons]{David Simmons}
\address{David Simmons,  The University of York, England, UK.}  \email{David.Simmons@york.ac.uk}
\author[B-W. Wang]{Baowei Wang}
\address{Bao-wei Wang, School  of  Mathematics  and  Statistics,  Huazhong  University  of Science  and  Technology, 430074 Wuhan,  China.}
 \email{bwei\_wang@hust.edu.cn}

\begin{abstract} We study the dynamical Borel-Cantelli lemma for recurrence sets in a measure preserving dynamical system $(X, \mu, T)$ with a compatible metric $d$. We prove that,   under some regularity conditions,   the $\mu$-measure of the following set
\[
 R(\psi)= \{x\in X : d(T^n x, x) < \psi(n)\ \text{for infinitely many}\ n\in\N \}
\]
obeys a zero-full law according to the convergence or divergence of a certain series, where $\psi:\N\to\R^+$.  Some of the applications of our main theorem include the continued fractions dynamical systems, the beta dynamical systems, and the homogeneous self-similar sets.
\end{abstract}

\maketitle

\section{Introduction}

Poincar\'{e}'s recurrence theorem is one of the most fundamental results in a dynamical system which concerns the properties of the distribution of orbits.
More precisely, let $(X, \mathcal{B}, \mu, T)$ be a measure preserving system with a compatible metric $d$, that is,  $(X, d)$ is a metric space, $\mathcal{B}$ is a Borel $\sigma$-algebra of $X$, and $\mu$ is an $T$-invariant probability measure. If $(X, d)$ has a countable base then Poincar\'{e}'s recurrence theorem states that $\mu$-almost every $x\in X$ is recurrent in the
sense that  $$\liminf_{n\to\infty}d(T^nx, x)=0.$$

However,  this result gives no information about the speed at which a generic orbit $\{T^nx\}_{n\geq 0}$ comes back to the starting point or the shrinking neighbourhood.  A question of great importance is to determine conditions under which the rate of recurrence can be quantified for general dynamical systems.  In particular,  the focus is on the size of the following set:
\begin{equation*}\label{f1}
R(\psi):=\Big\{x\in X: d(T^nx, x)<\psi(n) \ {\text{for i.m.}}\ n\in \N\Big\}
\end{equation*}
 where $\psi:\N\to \R^+$ is a positive function and {\em i.m.} denotes {\em infinitely many}.
%
%

 The most significant and one of the first quantitative recurrence results is due to Boshernitzan \cite{Bo}.
 \begin{theorem}[\cite{Bo}]\label{t1} Let $(X, \mathcal{B}, \mu, T)$ be a measure preserving system with a compatible metric $d$. Assume that the $\alpha$-dimensional Hausdorff measure of $X$ is $\sigma$-finite for some $\alpha>0$. Then for $\mu$-almost
all $x\in X$,
\begin{equation}\label{Bo1}
\liminf_{n\rightarrow \infty }n^{1/\alpha }d(T^{n}x,x)<\infty.\ \
\end{equation}\end{theorem}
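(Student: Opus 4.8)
First I would reduce to the finite-measure case. Since $\HH^\alpha$ is $\sigma$-finite, write $X=\bigcup_k X_k$ with $\HH^\alpha(X_k)<\infty$; because $\bigcup_k X_k$ has full $\mu$-measure it suffices to prove, for each fixed $k$, that $\liminf_{n}n^{1/\alpha}d(T^nx,x)<\infty$ for $\mu$-a.e.\ $x\in X_k$. Set $M=\HH^\alpha(X_k)+1$. The whole argument rests on one elementary observation: if a set $U$ of small diameter contains both $x$ and an iterate $T^nx$, then $d(T^nx,x)\le\diam U$, so a quick return to a small set forces $d(T^nx,x)$ to be small, and Kac's lemma guarantees that quick returns are typical.

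Next I would fix a large parameter $L$ and a scale $\rho>0$, and take a cover of $X_k$, refined to a countable partition $\{U_i\}$, with $\diam U_i=\delta_i\le\rho$ and $\sum_i\delta_i^\alpha\le M$; this is the only place the Hausdorff-measure hypothesis enters. Calling $U_i$ \emph{heavy} when $\mu(U_i)\ge\delta_i^\alpha L^{-\alpha}$ and \emph{light} otherwise, the light cells have total measure $\sum_{i\ \mathrm{light}}\mu(U_i)<L^{-\alpha}\sum_i\delta_i^\alpha\le ML^{-\alpha}$. On each cell Kac's lemma gives $\int_{U_i}R_i\,d\mu\le1$ for the first-return time $R_i$, so by Markov's inequality the set of \emph{slow returners} $\{x\in U_i:R_i(x)>L^\alpha\mu(U_i)^{-1}\}$ has measure at most $\mu(U_i)L^{-\alpha}$, and summing over $i$ gives total measure at most $L^{-\alpha}$. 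For any $x$ lying in a heavy cell $U_i$ and returning quickly, the first-return time $n=R_i(x)$ satisfies $n\le L^\alpha\mu(U_i)^{-1}$ while $d(T^nx,x)\le\delta_i$, so by heaviness
\[
n^{1/\alpha}d(T^nx,x)\le\big(L^\alpha\mu(U_i)^{-1}\big)^{1/\alpha}\delta_i=L\,\mu(U_i)^{-1/\alpha}\delta_i\le L^2 .
\]
Hence, off a set of measure at most $2ML^{-\alpha}$, every $x\in X_k$ has at least one return time $n$ with $n^{1/\alpha}d(T^nx,x)\le L^2$.

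To pass from a single good return to the liminf, I would run this construction along a sequence of scales $\rho_m\downarrow0$, obtaining good sets $G_m$ with $\mu(G_m)\ge1-2ML^{-\alpha}$, so that $\mu(\limsup_m G_m)\ge1-2ML^{-\alpha}$ by the reverse Fatou inequality. For every non-periodic $x$ the first-return time to the $\rho_m$-neighbourhood of $x$ increases to infinity with $m$, so a point of $\limsup_m G_m$ yields return times $n_m\to\infty$, each obeying $n_m^{1/\alpha}d(T^{n_m}x,x)\le L^2$; periodic points satisfy $d(T^{np}x,x)=0$ and are trivial. Thus $\liminf_n n^{1/\alpha}d(T^nx,x)\le L^2$ on a set of measure $\ge1-2ML^{-\alpha}$, and letting $L\to\infty$ makes the liminf finite $\mu$-a.e.\ on $X_k$. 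Summing over $k$ finishes the proof.

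The main obstacle is not the one-return estimate but the uniform bookkeeping that turns it into an almost-everywhere statement: the contributions of the light cells and of the Kac slow returners must be bounded by quantities that are both summable over the whole (infinite) partition and uniform in the scale $\rho$, and a single good return must then be boosted to infinitely many. The tension between the packing requirement $\mu(U_i)\gtrsim\delta_i^\alpha$ and the efficient cover $\sum_i\delta_i^\alpha\le M$ is exactly where $\sigma$-finiteness of $\HH^\alpha$ is indispensable, and the exponent $1/\alpha$ in the conclusion is precisely the one forced by this balance.
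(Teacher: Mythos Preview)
The paper does not prove this statement: Theorem~\ref{t1} is quoted from Boshernitzan~\cite{Bo} as background, and no argument for it appears anywhere in the paper, so there is no ``paper's own proof'' to compare against.

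That said, your proposal is essentially Boshernitzan's original argument and is correct. The skeleton --- cover $X_k$ at scale $\rho$ with $\sum_i(\diam U_i)^\alpha\le M$, split into heavy and light cells, apply Kac's inequality $\int_{U_i}R_i\,d\mu\le 1$ together with Markov to control slow returners, deduce a single good return with $n^{1/\alpha}d(T^nx,x)\le L^2$ off a set of measure $O(L^{-\alpha})$, then iterate over scales $\rho_m\downarrow 0$ and let $L\to\infty$ --- is exactly the right one. Two places worth tightening in a written version: (i) the good-set bound should read $\mu(G_m\cap X_k)\ge\mu(X_k)-2ML^{-\alpha}$ rather than $\ge 1-2ML^{-\alpha}$, since your partition only covers $X_k$; this is harmless for the conclusion. (ii) The assertion that $n_m\to\infty$ for non-periodic $x$ deserves the one-line pigeonhole: if $n_{m_k}\le N$ along a subsequence, some fixed $j\le N$ has $T^jx$ in cells of diameter $\le\rho_{m_k}\to 0$ containing $x$, hence $T^jx=x$. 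With these cosmetic fixes the proof is complete.
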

In the same paper, in the concluding remarks,  Boshernitzan stated another theorem which deals with a situation when no apriori size of $(X, d)$ was known. 
\begin{theorem}[\cite{Bo}]\label{t2}
Let $(X, \mathcal{B}, \mu, T)$ be a measure preserving system with a compatible metric $d$ such that $(X,d)$ is $\sigma$-compact. Then there exists a sequence $\{a_n\}_{n\ge 1}$ with $a_n\to \infty$ as $n\to\infty$ depending on $(X,d)$, such that
for almost all $x\in X$, $$
\liminf_{n\to\infty}a_n\ d(T^nx, x)=0.
$$
\end{theorem}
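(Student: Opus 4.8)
The plan is to build the sequence $\{a_n\}$ by hand from a sequence of increasingly fine finite partitions, using Poincar\'e recurrence to control return times uniformly. First I would exploit $\sigma$-compactness: write $X=\bigcup_j K_j$ with $K_j$ compact and increasing. Fix sequences $\delta_k\downarrow 0$ and $\epsilon_k>0$ with $\sum_k\epsilon_k<\infty$. For each $k$, choose $j$ with $\mu(X\setminus K_j)<\epsilon_k$, cover $K_j$ by finitely many balls of radius $\delta_k/2$, and disjointify them into a finite partition $\mathcal{P}_k=\{A_1^{(k)},\dots,A_{m_k}^{(k)}\}$ of $K_j$ into measurable cells of diameter $<\delta_k$. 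The purpose of this step is the elementary observation that whenever $x$ and $T^n x$ lie in a common cell, one has $d(T^n x,x)<\delta_k$.

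Next I would invoke Poincar\'e recurrence in the following quantitative-in-time form. For each cell $A$ with $\mu(A)>0$, almost every $x\in A$ satisfies $T^n x\in A$ for infinitely many $n$; in particular, for any threshold $M$, almost every $x\in A$ returns to $A$ at some time $\ge M$. Fixing an increasing sequence $M_k\to\infty$, continuity of measure lets me pick $N_k>M_k$ so large that the set $B^{(k)}$ of those $x$ that return to their own cell of $\mathcal{P}_k$ at some time in $[M_k,N_k]$ satisfies $\mu(X\setminus B^{(k)})<\epsilon_k+2^{-k}$; I would also arrange $N_k<M_{k+1}$ so that the windows $[M_k,N_k]$ are pairwise disjoint and increasing. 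Since $\sum_k\mu(X\setminus B^{(k)})<\infty$, the convergence Borel--Cantelli lemma gives that almost every $x$ lies in $B^{(k)}$ for all sufficiently large $k$.

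With these windows fixed, I would define the sequence independently of $x$ by $a_n=\delta_k^{-1/2}$ for $M_k\le n<M_{k+1}$ (and $a_n=1$ for $n<M_1$). Because $\delta_k$ decreases to $0$, the sequence $\{a_n\}$ is nondecreasing and tends to infinity. For a typical $x$ and each large $k$ there is some $n_k\in[M_k,N_k]$ with $T^{n_k}x$ in the same cell of $\mathcal{P}_k$ as $x$, whence $d(T^{n_k}x,x)<\delta_k$ and therefore $a_{n_k}\,d(T^{n_k}x,x)<\delta_k^{1/2}\to 0$; this forces $\liminf_{n\to\infty}a_n\,d(T^n x,x)=0$.

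The main obstacle is the uniformity requirement: the sequence $\{a_n\}$ must not depend on $x$, whereas the actual return times $n_k(x)$ certainly do. This is resolved precisely by the fact that the windows $[M_k,N_k]$ are selected using only the \emph{measures} of the return-time sets, via continuity of measure, so they are deterministic; the per-point information is then compressed into the single qualitative statement that $x\in B^{(k)}$ for all large $k$, which Borel--Cantelli supplies almost everywhere. A secondary technical point lies in the partition construction, where one must verify that the disjointified cells are measurable of diameter $<\delta_k$ and that the uncovered part $X\setminus K_j$ is absorbed into the summable error $\epsilon_k$.
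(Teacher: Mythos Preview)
The paper does not contain a proof of this statement: Theorem~\ref{t2} is quoted from Boshernitzan's paper \cite{Bo} as background, without any argument supplied here. There is therefore nothing in the present paper to compare your proposal against.

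That said, your outline is a sound and essentially standard route to the result. The skeleton---fine finite partitions of large compact pieces, Poincar\'e recurrence on each cell, continuity of measure to trap returns in a deterministic window $[M_k,N_k]$, Borel--Cantelli to pass to almost every point, and a blocked definition $a_n=\delta_k^{-1/2}$ on $[M_k,M_{k+1})$---does yield $\liminf_n a_n\,d(T^n x,x)=0$ almost everywhere, and the sequence is genuinely independent of $x$.

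One point worth flagging: your windows $[M_k,N_k]$ are chosen using both $\mu$ (to pick $K_j$ with small complement) and $T$ (through the return-time sets), so the resulting $\{a_n\}$ depends on the full system $(X,d,\mu,T)$, not on $(X,d)$ alone as the theorem statement literally asserts. Note, however, that the paper's own Remark~1.5 describes the convergence speed in Theorem~\ref{t2} as ``depend[ing] on the underlying dynamical system $(X,T)$,'' so the phrasing here is already somewhat loose; under that reading your argument is complete. If you want the sequence to depend only on the metric space, a different construction is needed (this is what Boshernitzan actually does, exploiting only the metric structure to build a universal gauge).
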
%

Theorem \ref{t1} was improved by  Barreira-Saussol \cite{BaS} who showed that the exponent $\alpha$  can be replaced by the lower local dimension of a measure at $x$. For piecewise $C^2$ expanding maps with the ergodic measure equivalent to Lebsgue measure, Kirsebom-Kunde-Persson \cite{KKP} improved the speed in \eqref{Bo1} from $n$ to $n(\log n)^{\theta}$ with $\theta<1/2$.

\medskip

As far as a general error function $\psi$ is concerned, hardly anything is known. The only known results for $\mu$-measure of $R(\psi)$ are recently proven by Chang-Wu-Wu \cite{CWW2019}, Baker-Farmer \cite{Baker}, and Kirsebom-Kunde-Persson \cite{KKP}. Chang-Wu-Wu \cite{CWW2019} considered homogeneous self similar set satisfying the strong separation condition. Baker-Farmer \cite{Baker} generalised Chang-Wu-Wu's result to the finite conformal iterated function systems with open set condition. On the other hand, Kirsebom-Kunde-Persson \cite{KKP} presented the recurrence and shrinking target theory when $T$ is an integer matrix action with some condition about the eigenvalues. However, all of  these results are not applicable to some well known dynamical systems, for example,  $\beta$-dynamical systems or the dynamical systems of continued fractions.  We remedy this shortfall in this paper by providing a criterion on the size of $R(\psi)$ applicable to general dynamical systems satisfying certain conditions.



\medskip

Throughout we take $X$ to be a compact subset of $\mathbb{R}^d$. Let $\{X_i\}_{i\in\mathcal{I}}$
be a countable family of non-empty pairwise disjoint subsets of $X$ such that each $X_i$ is open in $X$. Suppose that  $T:~X\to~X$ is Borel measurable and for all $i\in\mathcal{I}$, $T|_{X_i}$ is a $C^1$ map. Furthermore, we assume that $T$ is expanding meaning that $\|(D_xT)^{-1}\|^{-1}>1$ for any $x\in X$. By the notation $D_xT$ we mean  the derivative of $T$  at a point $x\in X$ and $\| D_xT \|=\sup\limits_{v\in X}\frac{\|D_xT(v)\|_2}{\|v\|_2}$.
Let $\mu$ be a $T$-invariant probability measure and $$\mu\left(X\setminus\cup_{i\in\mathcal{I}}X_i\right)=0.$$

We will make consistent use of the following conditions.

 \medskip

\noindent{\bf Condition I (Ahlfors Regular):} the measure  $\mu$ is Ahlfors regular of dimension $\delta>0$, that is, there exist positive constants $\eta_1, \eta_2$ such that for any ball $B(x,r)\subset X$ with $x\in X$,
\begin{equation*}\label{1}\eta_1r^\delta\leq\mu(B(x,r))\leq \eta_2 r^\delta.\end{equation*}

 \medskip

\noindent{\bf Condition II (Exponentially Mixing):}  there
exist constants $C > 0$ and $0~<~\gamma <~1$ such that for any ball $E\subset X$ and measurable set $F\subset X$,
\begin{equation*}\label{2}|\mu(E\cap T^{-n}F)-\mu(E)\mu(F)|\leq C\gamma^n\mu(F), \ \ {\text{for all}}\ n\ge 1.\end{equation*}

 \medskip

\noindent{\bf Condition III (Bounded Distortion):} there exists $K>0$ such that
\begin{equation*}\label{3}K^{-1}\leq \frac{\|D_x(T^n)\|}{\|D_y(T^n)\|}\leq K\end{equation*}
for any $n\in\mathbb{N}$ and $x, y$ in a same cylinder $J_n\in\mathcal{F}_n$. Here $\mathcal{F}_n$ denotes the collection of cylinders of order $n$, that is,
$$\mathcal{F}_n:=\{X_{i_0}\cap T^{-1}X_{i_1}\cap\cdots\cap T^{-(n-1)}X_{i_{n-1}}: i_0, i_1, \dots, i_{n-1}\in\mathcal{I}\}.$$

\noindent{\bf Condition IV:} Denote
$K_{J_n}:=\inf\limits_{x\in J_n}\|D_x(T^n)\|.$ Assume that there exists a universal constant $K>0$ such that \begin{equation*}\label{f6}
\sum_{J_n\in \mathcal{F}_n}\Big(K_{J_n}\Big)^{-\delta}\le K, \  {\text{for all}}\ n\in \N.
\end{equation*}

\noindent{\bf Condition V (Conformality):}
 There exists a constant $C>0$ such that
 for any $J_n\in\mathcal{F}_n$ and ball $B(x_0, r)\subset J_n$, \begin{equation*}\label{4}B(T^n(x_0), C^{-1}K_{J_n} r)\subset T^nB(x_0,r)\subset B(T^n(x_0), CK_{J_n} r).\end{equation*}

Our main result is the following.

%
%
%

\begin{theorem}\label{maintheorem}
Let $\mu$ be a probability measure and $\psi$ be a positive function on $\mathbb{N}$. Suppose that $\mu$ satisfies the conditions \textup{(I--V)}.  Then
\begin{align*}
\mu(R(\psi)) = \left\{\begin{array}{cl}
0& {\rm if} \quad  \sum_{n=1}^\infty \psi^\delta(n)<\infty,\\[2ex]
1& {\rm if} \quad  \sum_{n=1}^\infty \psi^\delta(n)=\infty.
 \end{array}\right.
\end{align*}
\end{theorem}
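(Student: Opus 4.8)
The plan is to set $A_n=\{x\in X: d(T^nx,x)<\psi(n)\}$, so that $R(\psi)=\limsup_{n\to\infty}A_n$, and to establish the dichotomy by pairing sharp estimates for $\mu(A_n)$ and for the correlations $\mu(A_m\cap A_n)$ with the two Borel--Cantelli lemmas. For the convergence half I would first decompose $A_n$ over cylinders of order $n$: since $\mu(X\setminus\bigcup_i X_i)=0$, $\mu(A_n)=\sum_{J_n\in\mathcal F_n}\mu(A_n\cap J_n)$. On a fixed $J_n$ the map $T^n$ is an expanding $C^1$ homeomorphism, and Condition~V shows it behaves like a conformal dilation by a factor comparable to $K_{J_n}$; in particular $d(T^nx,T^ny)\ge C^{-1}K_{J_n}\,d(x,y)$ for $x,y\in J_n$. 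If $x,y\in A_n\cap J_n$, the triangle inequality gives $C^{-1}K_{J_n}d(x,y)\le d(T^nx,T^ny)\le 2\psi(n)+d(x,y)$, whence (using $K_{J_n}\to\infty$ under iteration of an expanding map) $\diam(A_n\cap J_n)\lesssim \psi(n)/K_{J_n}$. By Ahlfors regularity (Condition~I) this forces $\mu(A_n\cap J_n)\lesssim \psi^\delta(n)K_{J_n}^{-\delta}$, and summing over $J_n$ while invoking Condition~IV yields $\mu(A_n)\lesssim \psi^\delta(n)$. The convergence statement is then immediate from the first Borel--Cantelli lemma.

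For the divergence half the first step is the matching lower bound $\mu(A_n)\asymp\psi^\delta(n)$. Conditions~I and~V give $\mu(J_n)\asymp K_{J_n}^{-\delta}$, so $\sum_{J_n}K_{J_n}^{-\delta}\asymp\sum_{J_n}\mu(J_n)=1$; combined with the fact that $A_n\cap J_n$ actually \emph{contains} a ball of radius $\asymp\psi(n)/K_{J_n}$ (the differential $D_xT^n-\mathrm{Id}$ is invertible because $T$ is expanding), this gives $\mu(A_n)\gtrsim\psi^\delta(n)\sum_{J_n}\mu(J_n)\asymp\psi^\delta(n)$. Hence $\sum_n\mu(A_n)=\infty$, and it remains to verify the quasi-independence hypothesis $\sum_{m,n\le N}\mu(A_m\cap A_n)\lesssim\big(\sum_{n\le N}\mu(A_n)\big)^2$ needed to feed a divergence Borel--Cantelli lemma.

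Estimating $\mu(A_m\cap A_n)$ for $m<n$ is the main obstacle, and is where the exponential mixing (Condition~II) enters. The difficulty is that $A_n$ is a \emph{recurrence} event rather than a fixed shrinking-target event: the target $B(x,\psi(n))$ in $T^nx\in B(x,\psi(n))$ moves with the base point. A naive decoupling—setting $y=T^mx$ and noting $d(T^{n-m}y,y)<\psi(m)+\psi(n)$—replaces the level-$n$ radius by the larger $\psi(m)$ and is too lossy to give quasi-independence. Instead I would keep both radii tight by working cylinder-by-cylinder: on each order-$n$ cylinder $A_n$ is a ball of relative measure $\asymp\psi^\delta(n)$ inside $J_n$, and the task becomes showing these return-balls equidistribute, so that the proportion of them falling inside the order-$m$ return-ball of their parent cylinder is $\asymp\mu(A_m)$. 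This equidistribution is exactly what Condition~II supplies: writing $A_m$ as a disjoint union of balls over $\mathcal F_m$ and applying $|\mu(E\cap T^{-m}F)-\mu(E)\mu(F)|\le C\gamma^m\mu(F)$ with $E$ a return-ball and $F$ the relevant level-$(n-m)$ recurrence set, one should obtain $\mu(A_m\cap A_n)\lesssim\mu(A_m)\mu(A_n)+C\gamma^{m}(\text{error})$. The bookkeeping required to keep the error term summable—rather than proportional to the (possibly infinite) number of cylinders—is the technical heart of the proof.

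Granting the correlation estimate, the divergence Borel--Cantelli lemma yields $\mu(R(\psi))>0$. To upgrade this to full measure I cannot appeal to ergodicity, since $R(\psi)$ is not $T$-invariant; instead I would prove the \emph{localized} bound $\mu(R(\psi)\cap B)\ge\kappa\,\mu(B)$ for every ball $B\subset X$, with $\kappa>0$ independent of $B$. This is precisely why Condition~II is phrased with the first set a ball: the same mixing-based correlation estimate, run inside $B$, produces the local divergence Borel--Cantelli bound. Finally, were $\mu(R(\psi))<1$, the complement $R(\psi)^c$ would have a Lebesgue density point, contradicting the uniform lower density $\kappa$; hence $\mu(R(\psi))=1$.
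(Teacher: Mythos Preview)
Your convergence half is correct but follows a different path from the paper: you decompose $A_n$ over level-$n$ cylinders and use Conditions~I, IV, V to bound $\mu(A_n\cap J_n)\lesssim\psi^\delta(n)K_{J_n}^{-\delta}$, whereas the paper covers $X$ by balls $B$ of radius $\varepsilon\psi(n)$, observes $B\cap A_n\approx B\cap T^{-n}B(x_0,(1\pm\varepsilon)\psi(n))$, and applies Condition~II directly. Both give $\mu(A_n)\lesssim\psi^\delta(n)$.

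For the matching \emph{lower} bound your argument has a genuine gap. The claim that $A_n\cap J_n$ contains a ball of radius $\asymp\psi(n)/K_{J_n}$ amounts to asserting a periodic point $T^nx_*=x_*$ in $J_n$: invertibility of $D_xT^n-\mathrm{Id}$ only makes $x\mapsto T^nx-x$ a local diffeomorphism, it does not put $0$ in its range on $J_n$. In the paper's motivating example of $\beta$-expansions with non-integer $\beta$, most cylinders are not full and $A_n\cap J_n$ can well be empty; likewise $\mu(J_n)\asymp K_{J_n}^{-\delta}$ is in general only an upper bound. The paper sidesteps this by obtaining the lower bound from mixing too: the same ball-covering plus the lower inequality in Condition~II yields $\mu(A_n)\ge C_4\psi^\delta(n)-C_5\gamma^n$, using only Conditions~I and~II and never touching periodic points.

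Your correlation sketch points in the right direction but the details are off. One does not take $F$ to be a ``level-$(n-m)$ recurrence set'': after localizing to the return-ball $J_m^*\subset J_m$ of radius $\asymp K_{J_m}^{-1}\psi(m)$, one uses $J_m^*\cap A_n\subset J_m^*\cap T^{-n}B(z,2\psi(n))$ with $z$ the centre of $J_m^*$ (a \emph{fixed} ball), pushes $J_m^*$ forward by $T^m$ via Conditions~III and~V (picking up a Jacobian factor $K_{J_m}^{-\delta}$), and only then applies Condition~II. The error term therefore carries $\gamma^{n-m}$, not $\gamma^m$, and Condition~IV controls the sum of the Jacobian factors. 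A separate case, when $K_{J_m}^{-1}\psi(m)>\psi(n)$, requires first subdividing $J_m^*$ into balls of radius $\psi(n)$.

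Your full-measure upgrade via a localized Borel--Cantelli and density points is a valid alternative but heavier than the paper's trick: define $R'(\psi)=\{x:\liminf_n\psi(n)^{-1}d(T^nx,x)<\infty\}$, check that $R'(\psi)$ is $T$-invariant mod~$\mu$, deduce $\mu(R'(\psi))=1$ from ergodicity (implied by Condition~II), and then rerun the positive-measure argument with $\widetilde\psi(n)=\psi(n)/\ell(n)$ for a slowly growing $\ell(n)\to\infty$ to recover $\mu(R(\psi))=1$.
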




\medskip

An immediate consequence of this theorem is the following strengthening of the Boshernitzan's results.

\begin{corollary}
Under the setting above, for $\mu$-a.e. $x\in X$,
$$\liminf_{n\to\infty}\psi^{-1}(n)d(T^nx, x)=0,\ {\text{or}}\ \infty,$$ if
$\sum_{n=1}^\infty \psi^\delta(n)<\infty, \ {\text{or}} \ =\infty,$ respectively.
\end{corollary}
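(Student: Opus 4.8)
The plan is to deduce the corollary from Theorem \ref{maintheorem} by a scaling argument, exploiting the fact that the convergence or divergence of $\sum_{n=1}^\infty \psi^\delta(n)$ is unchanged when $\psi$ is multiplied by a positive constant. Indeed, for every $c>0$,
\[
\sum_{n=1}^\infty (c\psi)^\delta(n) = c^\delta \sum_{n=1}^\infty \psi^\delta(n),
\]
so $c\psi$ falls under exactly the same case of the theorem as $\psi$ itself.

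First I would treat the divergent case $\sum_{n=1}^\infty \psi^\delta(n)=\infty$. By the displayed identity, $\tfrac1k\psi$ has divergent series for every $k\in\N$, so Theorem \ref{maintheorem} gives $\mu\big(R(\tfrac1k\psi)\big)=1$; equivalently, for $\mu$-a.e.\ $x$ one has $\psi^{-1}(n)\,d(T^nx,x)<\tfrac1k$ for infinitely many $n$. Intersecting these countably many full-measure sets over $k\in\N$ produces a single full-measure set on which $\liminf_{n\to\infty}\psi^{-1}(n)\,d(T^nx,x)\le\tfrac1k$ for every $k$, and hence equals $0$. The convergent case $\sum_{n=1}^\infty \psi^\delta(n)<\infty$ is symmetric: applying Theorem \ref{maintheorem} to $k\psi$ gives $\mu\big(R(k\psi)\big)=0$ for every $k\in\N$, so that for $\mu$-a.e.\ $x$ the inequality $\psi^{-1}(n)\,d(T^nx,x)\ge k$ holds for all sufficiently large $n$; intersecting over $k$ yields a full-measure set on which $\liminf_{n\to\infty}\psi^{-1}(n)\,d(T^nx,x)\ge k$ for every $k$, and hence equals $\infty$.

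I do not anticipate any genuine obstacle, since the entire content is already supplied by Theorem \ref{maintheorem}; the only points needing care are the scale-invariance of the series and the passage to a countable intersection of almost-sure statements. It is worth recording how the two conclusions line up with the dichotomy: divergence of $\sum_{n=1}^\infty\psi^\delta(n)$ is the fast-recurrence regime and drives the normalised $\liminf$ to $0$, while convergence places $x$ outside every comparably shrinking target eventually and drives the $\liminf$ to $\infty$. This is consistent with, and sharpens, Boshernitzan's Theorem \ref{t1}: there the relevant series diverges, and the $\liminf$ is accordingly forced all the way down to $0$ rather than merely being finite.
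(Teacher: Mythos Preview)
Your proof is correct, and the paper itself gives no proof beyond declaring the corollary ``an immediate consequence'' of Theorem \ref{maintheorem}; your scaling argument (apply the theorem to $c\psi$ for $c=1/k$ or $c=k$ and intersect over $k\in\N$) is precisely the natural way to unpack that phrase, so there is nothing to compare.

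One remark worth making explicit: you have paired divergence of $\sum\psi^\delta(n)$ with $\liminf=0$ and convergence with $\liminf=\infty$, which is the mathematically correct pairing and the one required for the corollary to genuinely strengthen Boshernitzan's Theorem~\ref{t1} (where $\psi(n)=n^{-1/\delta}$ gives a divergent series and the conclusion is $\liminf<\infty$, sharpened here to $0$). The ``respectively'' in the stated corollary appears to have the two cases transposed; your version is the intended one.
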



We give some remarks in comparing our result with the results of Boshernitzan, Chang-Wu-Wu and Baker-Farmer.  Recall the recurrence set$$
R(\psi)=\Big\{x\in X: d(T^nx, x)<\psi(n) \ {\text{for i.m.}}\ n\in \N\Big\}.
$$

\begin{remark}[Boshernitzan \cite{Bo}] \

\begin{itemize}\item Theorems \ref{t1} and \ref{t2} provide a convergence speed $\psi$ such that $d(T^nx, x) \to~0$, but we do not know whether the convergence speed is optimal.

\item The general convergence speed in Theorem \ref{t2} depends on the underlying dynamical system $(X,T)$.

\item Boshernitzan also remarked, in the concluding remarks, that there is no `universal' convergence speed suitable for {\em all} dynamical system.
So this indicates, more or less, that if a universal function is wanted, the system must satisfy some additional conditions.
\end{itemize}
\end{remark}

\begin{remark}[Chang-Wu-Wu \cite{CWW2019}, Baker-Farmer \cite{Baker}] The results of Chang-Wu-Wu \cite{CWW2019} and Baker-Farmer \cite{Baker} are applicable to finite conformal iterated function systems with open set condition and the map $T:X\to X$ induced by the left shift. Generally speaking the set $$
   A_n:=\Big\{x\in X: d(T^nx, x)<\psi(n)\Big\}
    $$ concerns the distribution of the periodic points $$P_n:=\{x\in X: T^nx=x\}.$$ For finite conformal iterated function system with open set condition, it corresponds to a finite full shift symbolic space making it convenient to study the corresponding set. 
    \begin{itemize}\item
    The set $P_n$ can be precisely expressed as the points in $P_n$ are almost uniformly distributed in $X$.

    \item The finiteness of the iterated function system ensures that a ball is equivalent to a cylinder set. So everything can be translated to a finite full shift symbolic space.

        \item The natural measure supported on $X$ is a Gibbs measure, so it has a nice Bernoulli property which leads to quasi-independence of the sets in question.\end{itemize}

\end{remark}


\begin{remark}[Chernov-Kleinbock, \cite{CK2001}] We take this opportunity to compare the recurrence set above with the shrinking target set. Define the shrinking target set $$
S(\psi)=\Big\{x\in X: d(T^nx, x_0)<\psi(n), \ {\text{i.m.}}\ n\in \N\Big\}=\limsup_{n\to\infty}T^{-n}B(x_0, \psi(n))
$$ for some fixed $x_0\in X$. A dynamical Borel-Cantelli lemma for this setting was presented by Chernov-Kleinbock \cite{CK2001}. For this set,
\begin{itemize}
  \item Since the $\mu$ measure is invariant,  the measure of events $B_n:=T^{-n}B(x_0, \psi(n))$ can be calculated easily.

  \item The mixing property, {\bf Condition (II)}, together with the invariance property of $\mu$, can be applied directly to verify the quasi-independence of the events $\{B_n\}_{n\ge1}$.
\end{itemize}
\end{remark}
\begin{remark}[Our method] In our setting, the set $P_n$ cannot be constructed easily.  The events in our setting cannot be expressed as the $T$-inverse image of some sets, so the invariance of $\mu$ and the mixing property cannot be used directly. The way to overcome these difficulties is to look at the set
$A_n$ locally, then locally $A_n$ behaves like $T^{-n}B_n$ for some $B_n$. Then the invariance and the mixing property of $\mu$ can be applied. It should be noted that this will lead to a superposition of the error terms, which makes the problem more involved. 


\end{remark}

%

\medskip

\noindent {\bf Acknowledgements.}  M.H. was supported by the Australian Research Council Discovery Project (200100994).  B.L. was supported partially by NSFC 11671151 and Guangdong Natural Science Foundation 2018B0303110005.  D.S. was supported by the Royal Society Fellowship. B.W. was supported by NSFC 11722105.  Part of this work was carried out when B.L. and D.S. visited La Trobe University. Thanks to La Trobe University and MATRIX research institute for travel support.

\section{Proof of Theorem \ref{maintheorem}}

We split the proof of the theorem into several subsections for convenience. Let
$$A_n:=\{x\in X : d(T^nx,x) < \psi(n)\},$$
then $R(\psi)=\limsup\limits_{n\to\infty} A_n.$

For notational simplicity, we use $a\lesssim b$ or $a=O(b)$ to say $a\le Cb$ for some unspecified constant $C>0$; and $a\asymp b$ if $a\lesssim b$ and $b\lesssim a$.


 \subsection{The measure of $A_n$.}

In this subsection, the main aim is to prove the following proposition.
\begin{proposition}\label{prop1}
Assume that conditions (I) and (II) hold. Then
\begin{equation*}\label{equivalentseries}
 \sum_{n=1}^\infty \psi^\delta(n)=\infty\Longleftrightarrow \sum_{n=1}^\infty\mu(A_n)=\infty.
 \end{equation*}
\end{proposition}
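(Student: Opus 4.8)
The plan is to prove the sharper quantitative estimate
\[
c\,\psi^\delta(n)-C\gamma^n\;\le\;\mu(A_n)\;\le\;C\,\psi^\delta(n)+C\gamma^n
\]
for some constants $c,C>0$ and all large $n$, and then read off the proposition. Indeed, since $0<\gamma<1$ the geometric series $\sum_n\gamma^n$ converges, so $\sum_n\mu(A_n)$ and $\sum_n\psi^\delta(n)$ share the same convergence behaviour: the convergence direction follows from the upper bound, and for divergence I would simply discard the indices with $c\,\psi^\delta(n)<2C\gamma^n$ (these contribute a convergent tail to $\sum\psi^\delta(n)$), leaving a divergent sum on which $\mu(A_n)\gtrsim\psi^\delta(n)$. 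One may also assume $\psi(n)\le\operatorname{diam}X$ throughout, the discarded indices being harmless for both series.

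The obstacle flagged in the ``Our method'' remark is that $A_n$ is a moving-target set, not of the form $T^{-n}F$, so Condition~(II) cannot be applied directly; the plan is to \emph{linearise the target at the scale $\psi(n)$}. Fix $n$ and take a maximal $\psi(n)$-separated set $\{x_i\}_{i=1}^N$ in $X$, so that the balls $B(x_i,\psi(n)/2)$ are pairwise disjoint and the balls $B(x_i,\psi(n))$ cover $X$; Condition~(I) then forces $N\asymp\psi(n)^{-\delta}$ by the usual volume count. The key localisation is the two-sided inclusion: if $x\in B(x_i,\psi(n))$ then $d(T^nx,x_i)\le d(T^nx,x)+\psi(n)$, whence
\[
A_n\cap B(x_i,\psi(n))\subseteq B(x_i,\psi(n))\cap T^{-n}B(x_i,2\psi(n)),
\]
while conversely $B(x_i,\tfrac{\psi(n)}{2})\cap T^{-n}B(x_i,\tfrac{\psi(n)}{2})\subseteq A_n$ by the triangle inequality. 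Thus on each cell the moving target is sandwiched between $T^{-n}$-preimages of two \emph{fixed} balls, which is exactly what lets Condition~(II) enter.

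I would then apply Condition~(II) on each cell (taking $E$ the cell and $F$ the fixed target ball) and estimate every ball measure via Condition~(I). Subadditivity over the cover yields the upper bound
\[
\mu(A_n)\le\sum_{i=1}^N\Big(\mu(B(x_i,\psi(n)))\,\mu(B(x_i,2\psi(n)))+C\gamma^n\mu(B(x_i,2\psi(n)))\Big)\lesssim N\big(\psi(n)^{2\delta}+\gamma^n\psi(n)^\delta\big)\asymp\psi^\delta(n)+\gamma^n,
\]
and summing over the disjoint cells gives the matching lower bound $\mu(A_n)\gtrsim N(\psi(n)^{2\delta}-\gamma^n\psi(n)^\delta)\asymp\psi^\delta(n)-\gamma^n$, which is the claimed estimate.

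The one genuinely delicate point, and where I expect the main care to be needed, is the \emph{superposition of the $N\asymp\psi(n)^{-\delta}$ mixing errors}: each error is $C\gamma^n\mu(F)\asymp\gamma^n\psi(n)^\delta$, and a priori multiplying by $N$ could wreck the bound. The saving observation is that the factor $\psi(n)^{-\delta}$ counting the cells cancels the factor $\psi(n)^{\delta}$ from $\mu(F)$, so the total error at scale $n$ is only of order $\gamma^n$, which is summable in $n$ and hence invisible to the zero--full dichotomy. Verifying this cancellation, together with the count $N\asymp\psi(n)^{-\delta}$ from Ahlfors regularity alone, is the crux; note that Conditions~(III)--(V) play no role here, only Conditions~(I) and~(II).
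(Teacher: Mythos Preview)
Your proposal is correct and follows essentially the same approach as the paper: both localise $A_n$ at scale $\psi(n)$, sandwich it cellwise between $T^{-n}$-preimages of fixed balls via the triangle inequality (the paper's Lemma~\ref{lemmaAnapprox}), apply Conditions~(I) and~(II) on each cell, and observe that the $N\asymp\psi(n)^{-\delta}$ mixing errors aggregate to $O(\gamma^n)$. The only cosmetic difference is that the paper uses Vitali's $5r$ covering lemma with an auxiliary parameter $\varepsilon$ (eventually set to $\tfrac12$), whereas you go straight to a maximal $\psi(n)$-separated set; the resulting estimate $c\,\psi^\delta(n)-C\gamma^n\le\mu(A_n)\le C\,\psi^\delta(n)+C\gamma^n$ and the deduction of the proposition are the same.
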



As previously stated, the set $A_n$ cannot be expressed in the form $T^{-n}B_n$ for some $B_n$. However,  if considered locally, it can be expressed in this form.
\begin{lemma}
\label{lemmaAnapprox}
 Let $B = B(x_0, r)$ be a ball centred at $x_0\in X$ and radius $r>0$.  Then for any  $n\in\mathbb{N}$ with $\psi(n)>r$ and any subset $E$ of $B$,
\[
E \cap T^{-n}\Big(B(x_0,\psi(n)-r)\Big) \subset E\cap A_n \subset E \cap T^{-n}\Big(B(x_0,\psi(n)+r)\Big).
\]
\end{lemma}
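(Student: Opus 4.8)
The plan is to prove the two inclusions by a direct triangle-inequality argument, exploiting the hypothesis $\psi(n)>r$ together with the fact that every point of $E$ lies within distance $r$ of the centre $x_0$.

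First I would fix an arbitrary point $x\in E$. Since $E\subset B=B(x_0,r)$, we have the basic estimate $d(x,x_0)<r$ available throughout. The whole lemma amounts to comparing the condition defining $A_n$, namely $d(T^nx,x)<\psi(n)$, with the condition defining $T^{-n}(B(x_0,\rho))$, namely $d(T^nx,x_0)<\rho$, and the bridge between $x$ and $x_0$ is precisely the bound $d(x,x_0)<r$.

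For the right-hand inclusion $E\cap A_n\subset E\cap T^{-n}(B(x_0,\psi(n)+r))$, I would take $x\in E\cap A_n$ and apply the triangle inequality in the form
\[
d(T^nx,x_0)\le d(T^nx,x)+d(x,x_0)<\psi(n)+r,
\]
so that $T^nx\in B(x_0,\psi(n)+r)$, i.e. $x\in T^{-n}(B(x_0,\psi(n)+r))$; combined with $x\in E$ this gives the claim. For the left-hand inclusion $E\cap T^{-n}(B(x_0,\psi(n)-r))\subset E\cap A_n$, I would take $x$ in the left-hand set, so $d(T^nx,x_0)<\psi(n)-r$ (this is where the hypothesis $\psi(n)>r$ is needed, to ensure the radius $\psi(n)-r$ is positive and the ball nonempty/meaningful), and again use the triangle inequality,
\[
d(T^nx,x)\le d(T^nx,x_0)+d(x_0,x)<(\psi(n)-r)+r=\psi(n),
\]
which shows $x\in A_n$; together with $x\in E$ this yields the inclusion.

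There is essentially no obstacle here: the statement is a clean consequence of the triangle inequality, and the only subtlety to flag is the role of the assumption $\psi(n)>r$, which guarantees the inner radius $\psi(n)-r$ is strictly positive so that the smaller ball is a genuine object. The geometric content is simply that replacing the moving target $x$ by the fixed centre $x_0$ perturbs each recurrence radius by at most $r$, which is exactly the source of the $\pm r$ in the two bounding balls; this localisation is the device announced in the preceding remarks that lets $A_n$ be sandwiched between sets of the form $T^{-n}B$, so that the invariance and mixing of $\mu$ can subsequently be brought to bear.
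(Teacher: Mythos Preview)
Your proof is correct and follows exactly the same approach as the paper: the right-hand inclusion is obtained via the triangle inequality $d(T^nx,x_0)\le d(T^nx,x)+d(x,x_0)<\psi(n)+r$, and the left-hand inclusion (which the paper leaves as ``follows similarly'') is precisely the argument you spell out.
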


\begin{proof}
Fix  a point $x\in E\cap A_n$, then $d(x, x_0)<r$ and $d(T^n x, x) <~\psi(n)$. By using the triangle inequality
\begin{align*}d(T^nx, x_0)&\leq d(T^nx, x)+d(x, x_0)<\psi(n)+r.\end{align*}
That is, $x\in T^{-n}(B(x_0,\psi(n)+r))$. Therefore, $$E\cap A_n \subset E \cap T^{-n}\Big(B(x_0,\psi(n)+r)\Big).$$
The left hand side inclusion follows similarly.
\end{proof}
\begin{remark}
The lemma above gives us a way to write the set $A_n$ as the inverse of a ball with a fixed center by restricting it to a smaller ball. If we choose the ball $B= B(x_0,\epsilon\psi(n))$ with $0<\varepsilon< 1$, then the above lemma yields
\[
B \cap T^{-n}(B(x_0,(1-\epsilon)\psi(n) )) \subset B\cap A_n \subset B \cap T^{-n}(B(x_0, (1+\epsilon)\psi(n))).
\]
\end{remark}

For any ball $B$, with the Lemma \ref{lemmaAnapprox} at our disposal, we are in a position to estimate the measure of $B\cap A_n$.

\begin{lemma}\label{estimateAn}
Let $0<\varepsilon\le \frac{1}{2}$  and $B = B(x_0,\epsilon\psi(n))$ with fixed $x_0\in X$. Assume that conditions (I) and (II) hold.  Then
\begin{align}\label{intervalAn}
\mu(B\cap A_n) &\geq
C_1\mu(B) \psi^\delta(n) - C_2\gamma^n \psi^\delta(n) \\
\mu(B\cap A_n) &\leq C_3\mu(B) \psi^\delta(n) + C_3\gamma^n \psi^\delta(n)\label{ine8},
\end{align}
where $C_1=\eta_1 (1-\varepsilon)^\delta, C_2=\eta_2 C(1-\varepsilon)^\delta, C_3=\max\{\eta_2(1+\varepsilon)^\delta,  \eta_2C(1+\varepsilon)^\delta\}$ are constants and
 $\eta_1, \eta_2, C$ are constants arising from conditions (I) and (II).
\end{lemma}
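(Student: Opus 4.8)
The plan is to sandwich $B \cap A_n$ between two sets of the form $B \cap T^{-n}(\text{ball})$ via the localization of Lemma~\ref{lemmaAnapprox}, and then estimate each side by combining the exponential mixing of Condition~(II) with the Ahlfors regularity of Condition~(I). Since $0 < \varepsilon \le \tfrac12 < 1$, the radius $r = \varepsilon\psi(n)$ of $B$ satisfies $r < \psi(n)$, so Lemma~\ref{lemmaAnapprox} applies with $E = B$ and delivers
\[
B \cap T^{-n}\bigl(B(x_0,(1-\varepsilon)\psi(n))\bigr) \subset B \cap A_n \subset B \cap T^{-n}\bigl(B(x_0,(1+\varepsilon)\psi(n))\bigr),
\]
which is precisely the inclusion recorded in the Remark after that lemma. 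The whole argument then amounts to passing $\mu$ through these two inclusions.

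For the upper bound I would apply Condition~(II) with the ball $E = B$ and the set $F = B(x_0,(1+\varepsilon)\psi(n))$, which gives $\mu(B \cap T^{-n}F) \le \mu(B)\mu(F) + C\gamma^n\mu(F)$. The factor $\mu(F)$ is then controlled from above by the right-hand Ahlfors estimate, $\mu(F) \le \eta_2\bigl((1+\varepsilon)\psi(n)\bigr)^\delta = \eta_2(1+\varepsilon)^\delta\psi^\delta(n)$. Substituting this into both terms yields $\mu(B)\,\eta_2(1+\varepsilon)^\delta\psi^\delta(n) + \eta_2 C(1+\varepsilon)^\delta\gamma^n\psi^\delta(n)$, and absorbing the two coefficients into $C_3 = \max\{\eta_2(1+\varepsilon)^\delta,\ \eta_2 C(1+\varepsilon)^\delta\}$ produces exactly \eqref{ine8}.

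For the lower bound I would instead take $F = B(x_0,(1-\varepsilon)\psi(n))$, so that Condition~(II) gives $\mu(B \cap T^{-n}F) \ge \mu(B)\mu(F) - C\gamma^n\mu(F)$. The crucial point is that the two occurrences of $\mu(F)$ must be estimated in \emph{opposite} directions: the leading term $\mu(B)\mu(F)$ calls for the \emph{lower} Ahlfors bound $\mu(F) \ge \eta_1(1-\varepsilon)^\delta\psi^\delta(n)$, whereas the subtracted error term $-C\gamma^n\mu(F)$ calls for the \emph{upper} Ahlfors bound $\mu(F) \le \eta_2(1-\varepsilon)^\delta\psi^\delta(n)$. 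Reading off the coefficients then gives $C_1 = \eta_1(1-\varepsilon)^\delta$ and $C_2 = \eta_2 C(1-\varepsilon)^\delta$, which is \eqref{intervalAn}.

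I do not expect a serious conceptual obstacle here: the statement is a direct concatenation of the three ingredients already in place (localization, mixing, regularity). The only thing genuinely requiring care — and what I would double-check — is the bookkeeping of which side of the two-sided Ahlfors estimate to invoke for each of the four terms, since using the lower bound where the upper is needed (or vice versa) would silently reverse an inequality. I would also confirm at the outset that Condition~(II) is legitimately applicable, namely that $E = B$ is a ball (true by construction) and that the enlarged and shrunk sets $F$ are measurable (they are balls), while keeping in mind the standing convention that the Ahlfors bounds are used only for radii up to $\diam(X)$, which is harmless in the regime of interest where $\psi(n)$ is small.
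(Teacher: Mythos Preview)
Your proposal is correct and follows essentially the same approach as the paper: sandwich $B\cap A_n$ via Lemma~\ref{lemmaAnapprox}, apply the mixing inequality of Condition~(II) to each side, and then read off the constants using the appropriate direction of the Ahlfors regularity bounds. The paper's proof in fact writes out only the lower bound \eqref{intervalAn} and declares \eqref{ine8} analogous; your account is slightly more explicit, and your remark about using opposite sides of the Ahlfors estimate for the two terms in the lower bound is exactly the one point of care involved.
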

\begin{proof} We prove inequality (\ref{intervalAn}) only as the proof of inequality \eqref{ine8} follows similarly.

Using the left inclusion in Lemma \ref{lemmaAnapprox} and then the mixing property of $\mu$ ({\bf Condition II}), we have\begin{align*}
\mu(B\cap A_n)
&\geq \mu\Big(B\cap T^{-n} B(x_0,(1-\varepsilon)\psi(n))\Big)\\
&\geq \mu(B)\cdot \mu\Big(B(x_0,(1-\varepsilon)\psi(n))\Big) - C\gamma^n \mu\Big(B(x_0,(1-\varepsilon)\psi(n))\Big).\end{align*} Now using the Ahlfors regularity of $\mu$ ({\bf Condition I}), we conclude that
\begin{align*}\mu(B\cap A_n)\geq \eta_1(1-\varepsilon)^\delta \mu(B) \psi^\delta(n) - \eta_2 C(1-\varepsilon)^\delta\gamma^n \psi^\delta(n).
\end{align*}
\end{proof}
The next lemma estimates the $\mu$-measure for the set $A_n$.

\begin{lemma}\label{estimatemeasures}
Let $0<\varepsilon<1$ and $n\in\mathbb{N}$. Assume that conditions (I) and (II) hold. Then
$$C_4\psi^\delta(n)-C_5\gamma^n\varepsilon^{-\delta}\leq\mu(A_n)\leq C_6\psi^\delta(n)+C_6\gamma^n\varepsilon^{-\delta},$$
where $C_4= \eta_1\eta_2^{-1}5^{-\delta}C_1, \ C_5=5^{-\delta}\varepsilon^{-\delta}C_2,$ and $C_6=\max\{\eta_1^{-1}\eta_2C_3 5^\delta, \eta_1^{-1}\varepsilon^{-\delta}C_3\}$ are constants.
\end{lemma}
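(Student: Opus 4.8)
The plan is to patch the local estimate of Lemma~\ref{estimateAn} across all of $X$ by means of a Vitali-type $5r$-covering argument. First I would restrict attention to those $n$ for which $\epsilon\psi(n)$ is no larger than a fixed multiple of $\diam X$, so that the balls appearing below are genuine balls in $X$ to which Condition~(I) and Lemma~\ref{estimateAn} apply; for the remaining $n$ one has $A_n=X$, and such $n$ force both $\sum\psi^\delta(n)$ and $\sum\mu(A_n)$ to diverge, so they are harmless in the series comparison of Proposition~\ref{prop1} that this lemma feeds.

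For the lower bound, apply the $5r$-covering lemma to the family $\{B(x,\epsilon\psi(n)):x\in X\}$ to extract a countable pairwise disjoint subfamily $\{B_i\}=\{B(x_i,\epsilon\psi(n))\}$ whose five-fold dilates still cover $X$. Disjointness together with the lower estimate in Lemma~\ref{estimateAn} gives
\[
\mu(A_n)\ \ge\ \sum_i \mu(B_i\cap A_n)\ \ge\ C_1\psi^\delta(n)\sum_i\mu(B_i)\ -\ \#\{B_i\}\cdot C_2\gamma^n\psi^\delta(n).
\]
Condition~(I) then does the counting in both directions: the covering property forces $\sum_i\mu(B_i)\gtrsim 1$ (since $1=\mu(X)\le\sum_i\mu(B(x_i,5\epsilon\psi(n)))\le \eta_2 5^\delta(\epsilon\psi(n))^\delta\,\#\{B_i\}$ while each $\mu(B_i)\ge\eta_1(\epsilon\psi(n))^\delta$), whereas disjointness forces $\#\{B_i\}\lesssim(\epsilon\psi(n))^{-\delta}$. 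Feeding these two bounds in produces exactly the main term $C_4\psi^\delta(n)$ and an error of order $\epsilon^{-\delta}\gamma^n$.

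For the upper bound I would run the dual argument. Choosing the disjoint balls to have radius $\epsilon\psi(n)/5$, so that their $5r$-dilates $\tilde B_i=B(x_i,\epsilon\psi(n))$ both cover $X\supset A_n$ and have a radius to which Lemma~\ref{estimateAn} applies, subadditivity gives $\mu(A_n)\le\sum_i\mu(\tilde B_i\cap A_n)$, and the upper estimate \eqref{ine8} bounds each summand. The same Ahlfors-regularity counting---now $\#\{\tilde B_i\}\lesssim (\epsilon\psi(n))^{-\delta}$ and $\sum_i\mu(\tilde B_i)\lesssim 1$---collapses the main term to $C_6\psi^\delta(n)$ and the accumulated error to $C_6\epsilon^{-\delta}\gamma^n$.

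The step I expect to be the main obstacle is controlling the superposition of the error terms, precisely the phenomenon flagged in the remark on our method. Each of the $\asymp(\epsilon\psi(n))^{-\delta}$ balls contributes an error $\gamma^n\psi^\delta(n)$ that is harmless on its own, but summing over the covering inflates it by the factor $(\epsilon\psi(n))^{-\delta}$, leaving an aggregate error of size $\epsilon^{-\delta}\gamma^n$ that no longer carries the decaying factor $\psi^\delta(n)$. The delicate point is therefore to keep this aggregated error cleanly separated from the main term $\asymp\psi^\delta(n)$ while making the ball-counting via Condition~(I) tight in both directions simultaneously; the free parameter $\epsilon$ is retained precisely so that, at the later Borel--Cantelli stage, the term $\epsilon^{-\delta}\gamma^n$ can be rendered negligible against $\psi^\delta(n)$.
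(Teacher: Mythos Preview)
Your proposal is correct and follows essentially the same route as the paper: a $5r$-covering of $X$ by balls of radius comparable to $\varepsilon\psi(n)$, Ahlfors regularity to count the balls in both directions, and Lemma~\ref{estimateAn} applied ball-by-ball, with the accumulated error collapsing to $\varepsilon^{-\delta}\gamma^n$ exactly as you describe. The only cosmetic difference is that for the upper bound the paper keeps the disjoint balls at radius $\varepsilon\psi(n)$ and applies Lemma~\ref{estimateAn} to the dilated balls $B(x_j,5\varepsilon\psi(n))$, whereas you shrink the disjoint balls to radius $\varepsilon\psi(n)/5$ so that the dilates land at radius $\varepsilon\psi(n)$; this is immaterial and only shifts where the factor $5^\delta$ appears in the constants.
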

\begin{proof}
Consider the collection of balls $$\Big\{B(x, \varepsilon\psi(n)): x\in X\Big\},$$ which naturally covers $X$. By Vitali's covering theorem (or commonly known as $5r$ covering lemma), we can find countably many disjoint balls $\{B(x_j, \varepsilon\psi(n))\}_{j\in \mathcal{J}}$ such that
\begin{equation}\label{covering}
\bigcup_{j\in \mathcal{J}}B(x_j, \varepsilon\psi(n))\subset X\subset \bigcup_{j\in \mathcal{J}}B(x_j, 5\varepsilon\psi(n)).
\end{equation}
 By the left inclusion of (\ref{covering}) and the disjointness of $\{B(x_j, \varepsilon\psi(n))\}_{j\in \mathcal{J}}$, we have
\begin{align*}
\sum_{j\in \mathcal{J}}\eta_1(\varepsilon\psi(n))^\delta&\leq\sum_{j\in \mathcal{J}}\mu(B(x_j, \varepsilon\psi(n))) \\ &=\mu\left(\bigcup_{j\in \mathcal{J}}B(x_j, \varepsilon\psi(n))\right)\\&\leq \mu(X)= 1.\end{align*}
So the cardinality $\mathcal{N}$ of $\mathcal{J}$ is bounded from above by $\eta_1^{-1}(\epsilon\psi(n))^{-\delta}$. Similarly, by the right inclusion of \eqref{covering}, we have
\begin{align*}1=\mu(X)&= \mu(\bigcup_{j\in \mathcal{J}}B(x_j, 5\varepsilon\psi(n)))\\& \leq\sum_{j\in \mathcal{J}}\mu(B(x_j,5 \varepsilon\psi(n))) \\ &\leq \sum_{j\in \mathcal{J}}\eta_25^{\delta}(\varepsilon\psi(n))^\delta.\end{align*}
Thus $\mathcal{N}$ is bounded from below by $\eta_2^{-1}5^{-\delta}(\epsilon\psi(n))^{-\delta}$.

It is clear that \begin{equation}\label{f4}A_n\subset\bigcup_{j\in \mathcal{J}}(B(x_j, 5\varepsilon\psi(n))\cap A_n).\end{equation} Thus by Lemma \ref{estimateAn},
\begin{align*}
\mu(A_n)&\leq \sum_{j\in \mathcal{J}}\mu\Big(B(x_j, 5\varepsilon\psi(n))\cap A_n\Big)\\ &\leq \mathcal{N}\cdot \bigg[C_3\mu\Big(B(x_j, 5\varepsilon\psi(n))\Big) \psi^\delta(n) + C_3\gamma^n \psi^\delta(n)\bigg]\\
&\leq \eta_1^{-1}\eta_2C_3 5^\delta\psi^\delta(n) + \eta_1^{-1}\varepsilon^{-\delta}C_3\gamma^n.
\end{align*}
The other inequality concerning $\mu$ to be proved can be done by replacing (\ref{f4}) by $$A_n\supset\bigcup_{j=1}^{\mathcal{N}}(B(x_j, \varepsilon\psi(n))\cap A_n).$$
\end{proof}

\begin{proof}[Proof of Proposition \ref{prop1}] Take $\varepsilon=\frac{1}{2}$. Then in view of Lemma \ref{estimatemeasures}  we have that
$$\sum_{n=1}^\infty\mu(A_n)\asymp  \sum_{n=1}^\infty \psi^\delta(n)+ \sum_{n=1}^\infty \gamma^n.$$
Since $0<\gamma<1$, the second term on the right converges and the proof of the proposition is complete.
\end{proof}

\subsection{Estimating the measure of $A_m\cap A_n$ with $m<n$} Recall that $\mathcal{F}_m$ denotes the collection of cylinders of order $m$,
$$\mathcal{F}_m:=\{X_{i_0}\cap T^{-1}X_{i_1}\cap\cdots\cap T^{-(m-1)}X_{i_{m-1}}: i_0, i_1, \dots, i_{m-1}\in\mathcal{I}\}.$$

\begin{lemma}\label{l1}
Let $J_m$ be a cylinder in $\mathcal{F}_m$.  For any open set $U\subset J_m$, we have
$$\mu(T^mU)\asymp K_{J_m}^\delta\mu(U).$$
\end{lemma}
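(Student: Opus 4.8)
The plan is to reduce the statement to a comparison on a single ball and then globalise by a Vitali ($5r$) covering argument, treating the two directions of $\asymp$ separately. The engine is the following one-ball estimate: if $B=B(x_0,r)\subset J_m$, then \textbf{Condition V} gives the two-sided inclusion $B(T^mx_0,\,C^{-1}K_{J_m}r)\subset T^mB\subset B(T^mx_0,\,CK_{J_m}r)$, and feeding this into the Ahlfors regularity of \textbf{Condition I} yields $\mu(T^mB)\asymp(K_{J_m}r)^\delta\asymp K_{J_m}^\delta\,\mu(B)$, with comparison constants depending only on $C,\eta_1,\eta_2,\delta$.

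For the upper bound $\mu(T^mU)\lesssim K_{J_m}^\delta\mu(U)$ I would exploit that both $J_m$ and $U$ are open: for each $x\in U$ choose $r(x)>0$ with $B(x,5r(x))\subset J_m$ and $B(x,r(x))\subset U$. Applying the $5r$-covering lemma to the family $\{B(x,r):x\in U,\ r\le r(x)\}$ produces pairwise disjoint balls $B_j=B(x_j,r_j)\subset U$ with $U\subset\bigcup_j5B_j$ and $5B_j\subset J_m$. Since $T^mU\subset\bigcup_jT^m(5B_j)$, subadditivity and the one-ball estimate (valid because $5B_j\subset J_m$) give $\mu(T^mU)\lesssim K_{J_m}^\delta\sum_jr_j^\delta$, while disjointness of the $B_j\subset U$ together with \textbf{Condition I} gives $\sum_jr_j^\delta\lesssim\eta_1^{-1}\sum_j\mu(B_j)\le\eta_1^{-1}\mu(U)$; combining these yields the claim.

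For the lower bound I would again take a disjoint Vitali family $B_j=B(x_j,r_j)\subset U$ with $U\subset\bigcup_j5B_j$, so that \textbf{Condition I} gives $\mu(U)\le\sum_j\mu(5B_j)\lesssim\sum_jr_j^\delta$. The essential extra ingredient is that $T^m$ is injective on the cylinder $J_m$, so that the images $T^mB_j\subset T^mU$ are pairwise disjoint; then $\mu(T^mU)\ge\sum_j\mu(T^mB_j)\gtrsim K_{J_m}^\delta\sum_jr_j^\delta\gtrsim K_{J_m}^\delta\mu(U)$, using the lower inclusion of \textbf{Condition V} and \textbf{Condition I} on each $B_j\subset J_m$.

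The step I expect to be the main obstacle is precisely this injectivity (equivalently, the disjointness of the images $T^mB_j$) in the lower bound: without it $T^m$ could fold $U$ onto itself and collapse the comparison. I would justify it from the expanding property $\|(D_xT)^{-1}\|^{-1}>1$ together with the cylinder structure of $\mathcal{F}_m$ — it is in any case implicit in the two-sided inclusion of \textbf{Condition V}, which forces $T^m|_{J_m}$ to behave like a conformal dilation by $K_{J_m}$. A secondary point requiring care is arranging, in the upper bound, that the dilated covering balls $5B_j$ stay inside $J_m$ so that \textbf{Condition V} genuinely applies to them; this is exactly where the openness of the cylinder $J_m$ is used.
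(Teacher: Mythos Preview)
Your argument is correct and follows the same overall strategy as the paper: first establish the one-ball comparison $\mu(T^mB)\asymp K_{J_m}^\delta\mu(B)$ from \textbf{Condition V} and Ahlfors regularity, then pass to general open $U\subset J_m$. The difference lies only in the globalisation step. The paper dispatches it in a single line, asserting that ``every open set can be written as the disjoint union of at most countably many balls'' --- a statement that is literally true only in dimension one, so the paper is being somewhat informal here. Your Vitali $5r$ argument, treating the upper and lower bounds separately and invoking injectivity of $T^m|_{J_m}$ for the latter, is the honest way to make this precise in $\mathbb{R}^d$; it costs a few more lines but closes the gap. Your identification of injectivity as the key extra ingredient for the lower bound is exactly right, and it is indeed implicit in the cylinder structure together with the expanding hypothesis (and holds in all the paper's applications).
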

\begin{proof}
For a ball $B(x_0, r)\subset J_m$,  {\bf Condition V} implies that $$
B(T^mx_0, C^{-1}K_{J_m}r)\subset T^mB(x_0,r)\subset B(T^mx_0, CK_{J_m}r).
$$ Then the Ahlfors regularity of $\mu$ implies that
$$\mu(T^mB(x_0,r))\asymp K_{J_m}^\delta\mu(B(x_0, r)).$$
Together with the fact that every open set can be written as the disjoint union of at most countably many balls, the desired result follows.
\end{proof}

\begin{lemma}
Let $J_m$ be a cylinder in $\mathcal{F}_m$. Then
$$\text{rad}(J_m)\lesssim K_{J_m}^{-1}\ \ \text{and}\ \ \mu(J_m)\lesssim K_{J_m}^{-\delta}.$$
\end{lemma}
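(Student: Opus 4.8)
The goal is to bound the radius and the measure of a cylinder $J_m\in\mathcal{F}_m$ in terms of $K_{J_m}=\inf_{x\in J_m}\|D_x(T^m)\|$. The plan is to exploit the fact that $T^m$ maps $J_m$ onto a set of bounded (indeed unit-order) diameter, so the expansion factor $K_{J_m}$ must be compensated by a correspondingly small preimage. Concretely, I would first note that $T^m(J_m)\subset X$, and since $X$ is a fixed compact set, $\operatorname{diam}(T^m(J_m))$ is bounded above by $\operatorname{diam}(X)=O(1)$. Taking a ball $B(x_0,r)\subset J_m$ that witnesses (up to a constant) the radius of $J_m$, Condition V gives $B(T^m x_0, C^{-1}K_{J_m}r)\subset T^m B(x_0,r)\subset T^m(J_m)$, so the ball on the left has radius $O(1)$, forcing $C^{-1}K_{J_m}\,\text{rad}(J_m)\lesssim \operatorname{diam}(X)$, i.e. $\text{rad}(J_m)\lesssim K_{J_m}^{-1}$.

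With the radius bound in hand, the measure estimate follows immediately from Ahlfors regularity (Condition I). Since $J_m$ is contained in a ball of radius $\asymp \text{rad}(J_m)\lesssim K_{J_m}^{-1}$, Condition I yields
\[
\mu(J_m)\le \mu\bigl(B(x_0',\text{rad}(J_m))\bigr)\le \eta_2\,\text{rad}(J_m)^\delta \lesssim K_{J_m}^{-\delta},
\]
which is the desired conclusion. Alternatively, and perhaps more cleanly, one can apply Lemma \ref{l1} directly: taking $U=J_m$ (or its interior, which carries full $\mu$-measure since the boundary is $\mu$-negligible), Lemma \ref{l1} gives $\mu(T^m J_m)\asymp K_{J_m}^\delta \mu(J_m)$, and since $\mu(T^m J_m)\le \mu(X)=1$ we obtain $K_{J_m}^\delta \mu(J_m)\lesssim 1$, hence $\mu(J_m)\lesssim K_{J_m}^{-\delta}$.

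I expect the main subtlety to lie in the radius estimate rather than the measure estimate: one must be careful that $J_m$ really does contain a ball of radius comparable to its own radius (so that Condition V can be applied with the right scale), and that the image $T^m(J_m)$ genuinely sits inside the fixed ambient set $X$ so that its diameter is universally bounded. The bounded distortion (Condition III) guarantees that $\|D_x(T^m)\|$ does not oscillate wildly across $J_m$, so the choice of the infimum $K_{J_m}$ versus any pointwise value only costs a constant factor, which is absorbed into the $\lesssim$ notation. Once these geometric points are clear, both inequalities reduce to a single application each of Condition V (or Lemma \ref{l1}) together with Ahlfors regularity, and no delicate estimates are required.
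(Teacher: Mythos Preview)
Your proposal follows the same two-step outline as the paper: bound the radius via the expansion of $T^m|_{J_m}$, then deduce the measure bound from Ahlfors regularity. The paper's own proof is a single sentence to this effect, so at the level of strategy you are aligned.

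There is, however, a genuine soft spot in your radius argument that you yourself flag but do not resolve. You write ``taking a ball $B(x_0,r)\subset J_m$ that witnesses (up to a constant) the radius of $J_m$'': this presupposes that the \emph{inner} radius of $J_m$ is comparable to its \emph{outer} radius, which is not guaranteed by the stated hypotheses and is exactly the point at issue. Condition~V only applies to balls contained in $J_m$, so invoking it with $r\asymp\text{rad}(J_m)$ is circular unless that comparability is already known. The paper's intended argument avoids this: from bounded distortion (Condition~III) and conformality (Condition~V) one gets the pointwise estimate $d(T^m x,T^m y)\asymp K_{J_m}\,d(x,y)$ for \emph{all} $x,y\in J_m$ (this same fact is used verbatim in the proof of Lemma~\ref{restriction}); since $T^m(J_m)\subset X$ has diameter $\le\operatorname{diam}(X)=O(1)$, it follows at once that $\operatorname{diam}(J_m)\lesssim K_{J_m}^{-1}$. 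This bypasses any inscribed-ball consideration.

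For the measure estimate both of your routes are fine. The first (radius bound plus Ahlfors upper regularity) is what the paper has in mind; the second, via Lemma~\ref{l1} with $U=J_m$ and $\mu(T^m J_m)\le 1$, is a clean alternative that does not even require the radius bound.
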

\begin{proof}
The proof follows straightaway from the expanding rate of $T^m|_{J_m}$ and then the Ahlfors regularity of $\mu$.
\end{proof}
\begin{lemma}\label{restriction}
Let $J_m$ be a cylinder in $\mathcal{F}_m$.  Then there is a ball of radius $r~=~K_{J_m}^{-1}\psi(m)$, say $B(z,r)$,
such that $$J_m\cap A_m\subset B(z,r)\cap J_m:=J_m^*.$$
\end{lemma}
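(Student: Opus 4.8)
The plan is to exploit the fact that, on the cylinder $J_m$, the iterate $T^m$ expands distances by a factor comparable to $K_{J_m}$ (this is exactly the content of Condition V), so that its inverse branch is a strong contraction; the near-recurrent points obeying $d(T^mx,x)<\psi(m)$ are then forced to cluster in a tiny neighbourhood of the periodic point of $T^m$ lying inside $J_m$.

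First I would record the metric consequence of Condition V. Since $T^m|_{J_m}$ is injective (the cylinder structure together with expansion makes each branch one-to-one), it has a well-defined inverse branch $\phi:=(T^m|_{J_m})^{-1}$, and the two inclusions of Condition V, combined with the bounded distortion of Condition III, translate into the bi-Lipschitz-type estimate
\[
d(T^mx,T^my)\asymp K_{J_m}\,d(x,y)\qquad(x,y\in J_m),
\]
equivalently $d(\phi(a),\phi(b))\lesssim K_{J_m}^{-1}\,d(a,b)$ for $a,b\in T^m(J_m)$.

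Next I would locate the centre $z$. Because $\phi$ contracts by a factor $\asymp K_{J_m}^{-1}$ and $K_{J_m}\ge\lambda^m$ grows (as $T$ is uniformly expanding, $\|(D_xT)^{-1}\|^{-1}\ge\lambda>1$), the map $\phi$ is a genuine contraction for all but finitely many $m$, so Banach's fixed point theorem furnishes a unique $z\in\overline{J_m}$ with $\phi(z)=z$, i.e. $T^mz=z$; this $z$ is the centre of the desired ball. Then for any $x\in J_m\cap A_m$ I would apply $\phi$ and the triangle inequality to get
\[
d(x,z)=d\big(\phi(T^mx),\phi(z)\big)\lesssim K_{J_m}^{-1}\,d(T^mx,z)\le K_{J_m}^{-1}\big(d(T^mx,x)+d(x,z)\big)<K_{J_m}^{-1}\big(\psi(m)+d(x,z)\big).
\]
Since the coefficient $\asymp K_{J_m}^{-1}$ multiplying $d(x,z)$ is $<\tfrac12$ for $m$ large, I can absorb the $d(x,z)$ term on the left and conclude $d(x,z)\lesssim K_{J_m}^{-1}\psi(m)$. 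Hence every point of $J_m\cap A_m$ lies within distance $\lesssim K_{J_m}^{-1}\psi(m)$ of $z$, which gives $J_m\cap A_m\subset B(z,r)$ with $r=K_{J_m}^{-1}\psi(m)$ once the constant is absorbed into $\lesssim$.

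The main obstacle is the first step — converting the single-ball inclusions of Condition V into a clean two-point distortion estimate — and, relatedly, ensuring that the inverse branch $\phi$ (hence the fixed point $z$) is defined on a domain containing $J_m$; this requires $T^m|_{J_m}$ to be injective with $T^m(J_m)\supset J_m$ (the Markov/full-branch feature present in the continued fraction, $\beta$-, and self-similar examples), and some care near $\partial J_m$, where Condition V — stated only for balls contained in $J_m$ — does not apply directly. For the finitely many small $m$ at which $K_{J_m}$ may fail to exceed the distortion constant, I would instead fall back on the crude bound $\operatorname{diam}(J_m\cap A_m)\le\operatorname{diam}(J_m)\lesssim K_{J_m}^{-1}$ supplied by the preceding lemma, which already yields the claim whenever $\psi(m)\gtrsim 1$.
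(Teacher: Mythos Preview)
Your expansion-plus-absorption strategy is exactly the right idea, but your choice of centre $z$ as the periodic point of $T^m|_{J_m}$ is both unnecessary and problematic. The existence of such a fixed point requires (as you yourself flag) that the inverse branch $\phi$ carry $\overline{J_m}$ into itself, i.e.\ essentially that $T^m(J_m)\supset J_m$. This full-branch property is \emph{not} among Conditions I--V, and it genuinely fails in one of the paper's main applications: for a generic $\beta>1$ the $\beta$-shift has many non-full cylinders $J_m$ with $T_\beta^m(J_m)$ a proper subinterval of $[0,1)$, so there need be no periodic point in $J_m$ at all. Your fallback for small $m$ does not touch this issue, which is about the branch structure, not about $m$ being small.

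The paper sidesteps the whole difficulty by choosing $z$ to be \emph{any} point of $J_m\cap A_m$ (the statement is vacuous if this set is empty). Then for $x\in J_m\cap A_m$ one has, from the expansion/conformality on $J_m$,
\[
\|D_z(T^m)\|\cdot d(x,z)\;\asymp\; d(T^mx,T^mz)\;\le\; d(T^mx,x)+d(x,z)+d(z,T^mz)\;<\;2\psi(m)+d(x,z),
\]
and since $\|D_z(T^m)\|\gtrsim 1$ (uniform expansion) the $d(x,z)$ on the right is absorbed, yielding $d(x,z)\lesssim K_{J_m}^{-1}\psi(m)$. This is the same absorption you perform, but with a centre that costs nothing to produce; no Banach fixed point, no Markov hypothesis, no boundary issues.
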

\begin{proof}
Choose $z\in J_m \cap A_m$. For any $x\in J_m\cap A_m$, on the one hand we have $$
d(T^mx,T^mz)\asymp  \|D_{z}(T^m)\|\cdot d(x, z);
$$ and on the other hand, $$
d(T^mx, T^mz)\le d(T^mx, x)+d(x, z)+d(z, T^mz)<2\psi(m)+d(x, z).
$$
Since $T$ is expanding, $$\|D_x(T^m)\| \geq \|(D_x(T^m))^{-1}\|^{-1}\gtrsim1,$$ thus $$
d(x, z)\lesssim \|D_{z}(T^m)\|^{-1}\psi(m).
$$

\end{proof}

\begin{proposition}\label{secondmomentestimate}
Let $m<n$. Then
\begin{eqnarray*}
\mu(A_m\cap A_n)\lesssim \psi^\delta(m)\psi^\delta(n)+\gamma^{n-m} \psi^\delta(n)+O(\gamma^n)\psi^\delta(m).
\end{eqnarray*}
\end{proposition}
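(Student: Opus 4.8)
The plan is to decompose $A_m$ along the cylinders of order $m$, localise on each cylinder by means of Lemma \ref{restriction}, and then sum the resulting local estimates using Condition IV. Since the cylinders in $\mathcal{F}_m$ are pairwise disjoint and cover $X$ up to a $\mu$-null set, we have $\mu(A_m\cap A_n)=\sum_{J_m\in\mathcal{F}_m}\mu(J_m\cap A_m\cap A_n)$. By Lemma \ref{restriction}, $J_m\cap A_m\subseteq J_m^{*}=B(z,r)\cap J_m$ with $r=K_{J_m}^{-1}\psi(m)$, so it suffices to bound $\mu(J_m^{*}\cap A_n)$ for each $J_m$ and then add up. The whole difficulty is thereby reduced to a single-cylinder estimate, and the target is to prove $\mu(J_m^{*}\cap A_n)\lesssim K_{J_m}^{-\delta}\big(\psi^\delta(m)\psi^\delta(n)+\gamma^{n-m}\psi^\delta(n)+\gamma^n\psi^\delta(m)\big)$, after which summing over $J_m$ and invoking $\sum_{J_m}K_{J_m}^{-\delta}\le K$ (Condition IV) yields the proposition at once.

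To estimate $\mu(J_m^{*}\cap A_n)$ I would distinguish two regimes according to the size of $r=K_{J_m}^{-1}\psi(m)$ relative to $\psi(n)$. When $r\lesssim\psi(n)$, I would transport the problem by $T^m$. First, the triangle inequality (exactly as in Lemma \ref{lemmaAnapprox}, and valid whether or not $\psi(n)>r$) gives the inclusion $J_m^{*}\cap A_n\subseteq J_m^{*}\cap T^{-n}B(z,\psi(n)+r)$. Then, since $T^m$ is injective on $J_m$ with $T^m(J_m)=X$, Lemma \ref{l1} yields $\mu(J_m^{*}\cap A_n)\asymp K_{J_m}^{-\delta}\,\mu\big(T^m(J_m^{*})\cap T^{-(n-m)}B(z,\psi(n)+r)\big)$. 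By the conformality of $T^m$ on $J_m$ (Condition V), the image $T^m(J_m^{*})$ lies in a ball $B(w,C\psi(m))$ of radius comparable to $K_{J_m}r=\psi(m)$; applying the mixing property (Condition II) over $n-m$ steps to this ball and $B(z,\psi(n)+r)$, together with Ahlfors regularity (Condition I) and $\psi(n)+r\asymp\psi(n)$, gives $\mu(J_m^{*}\cap A_n)\lesssim K_{J_m}^{-\delta}\big(\psi^\delta(m)\psi^\delta(n)+\gamma^{n-m}\psi^\delta(n)\big)$, which supplies the first two terms.

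When instead $r\gtrsim\psi(n)$, the target ball $B(z,\psi(n)+r)$ is too large for the transport step to be efficient, and I would instead cover $B(z,r)\supseteq J_m^{*}$ by $N\asymp(r/\psi(n))^\delta$ balls $B_\ell$ of radius $\tfrac12\psi(n)$ and apply Lemma \ref{estimateAn} to each. Summing the bounds $\mu(B_\ell\cap A_n)\lesssim\mu(B_\ell)\psi^\delta(n)+\gamma^n\psi^\delta(n)$ gives $\mu(J_m^{*}\cap A_n)\lesssim\mu(B(z,r))\psi^\delta(n)+N\gamma^n\psi^\delta(n)$; using $\mu(B(z,r))\asymp r^\delta=K_{J_m}^{-\delta}\psi^\delta(m)$ and, crucially, $N\gamma^n\psi^\delta(n)\asymp K_{J_m}^{-\delta}\psi^\delta(m)\gamma^n$, this becomes $\mu(J_m^{*}\cap A_n)\lesssim K_{J_m}^{-\delta}\big(\psi^\delta(m)\psi^\delta(n)+\gamma^n\psi^\delta(m)\big)$, which supplies the first and third terms.

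The main obstacle is precisely the \emph{superposition of error terms} flagged in the introductory remarks: a naive single transport in the regime $r\gtrsim\psi(n)$ would produce an uncontrollable $\psi^{2\delta}(m)$ contribution, so the threshold $r\sim\psi(n)$ must be chosen exactly so that in each regime the mixing error carries the factor $K_{J_m}^{-\delta}$. In the covering regime this rests on the cancellation $N\gamma^n\psi^\delta(n)\asymp K_{J_m}^{-\delta}\psi^\delta(m)\gamma^n$, whereby $\psi^\delta(n)$ disappears and the error becomes summable against Condition IV; verifying the covering count $N\asymp(r/\psi(n))^\delta$ via Ahlfors regularity and the conformal image bound $T^m(J_m^{*})\subseteq B(w,C\psi(m))$ are the technical points needing care. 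Once the per-cylinder bound is established, the final summation over $J_m\in\mathcal{F}_m$ using $\sum_{J_m}K_{J_m}^{-\delta}\le K$ is routine and produces the three asserted terms.
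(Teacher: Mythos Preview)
Your proposal is correct and follows essentially the same route as the paper: decompose $A_m$ along $\mathcal{F}_m$, localise via Lemma \ref{restriction}, split into the two regimes $r\le\psi(n)$ and $r>\psi(n)$, transport by $T^m$ and apply mixing over $n-m$ steps in the first regime, and cover $B(z,r)$ by $\asymp(r/\psi(n))^\delta$ balls of radius $\asymp\psi(n)$ in the second, finishing with Condition~IV. The only inessential deviation is your parenthetical claim $T^m(J_m)=X$, which is not assumed and not needed; Lemma \ref{l1} and injectivity of $T^m|_{J_m}$ suffice.
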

\begin{proof}

Write
$$A_m=\bigsqcup_{J_m\in\mathcal{F}_m}J_m\cap A_m\subset\bigsqcup_{J_m\in\mathcal{F}_m}J_m^*.$$
Now we estimate $\mu(J_m^*\cap A_n)$ for any fixed $J_m\in\mathcal{F}_m$. Take $r=K_{J_m}^{-1}\psi(m)$  and the ball $B(z,r)$ as in Lemma \ref{restriction}.
There are two cases.

\subsection*{Case (i):  $r\leq \psi(n)$} 
Applying Lemma \ref{lemmaAnapprox} to $J_m^*$, we have
\begin{eqnarray}\label{f7}
J_m^*\cap A_n\subset J_m^* \cap T^{-n}(B(z,2\psi(n))).\end{eqnarray}
Applying Lemma \ref{l1} to the right hand side of the inequality (\ref{f7}), we have \begin{eqnarray*}
\mu(J_m^*\cap A_n)\lesssim K_{J_m}^{-\delta}\cdot \mu\left(T^m(J_m^*)\cap T^{-(n-m)}\left(B(z,2\psi(n))\right)\right).\end{eqnarray*}
Note that by the conformality of $T$ ({\bf Condition V}), $$
T^mJ_m^*\subset T^mB(z,r)\subset B(T^mz, C \psi(m)).
$$
Finally by the mixing property of $\mu$ (Condition II), it follows that
\begin{align*}
\mu(J_m^*\cap A_n)&\leq K_{J_m}^{-\delta}\bigg[\mu\Big(B(T^mz, C\psi(m))\Big) \cdot \mu\Big(B(z,2\psi(n))\Big) + C\gamma^{n-m} \mu\Big(B(z, 2\psi(n))\Big)\bigg] \\
&\lesssim K_{J_m}^{-\delta}\bigg[\psi^\delta(m) \cdot\psi^\delta(n) + \gamma^{n-m} \psi^\delta(n)\bigg].
\end{align*}
So
\begin{align*}
I_1&=\sum_{J_m\in\mathcal{F}_m \atop r\leq \psi(n)}\mu(J_m^*\cap A_n)\lesssim
\psi^\delta(m) \cdot\psi^\delta(n) + \gamma^{n-m} \psi^\delta(n),\end{align*}
where we have used the boundedness ({\bf Condition IV}) of $\sum_{J_m\in\mathcal{F}_m}K_{J_m}^{-\delta}$.
Finally, by Lemma \ref{estimatemeasures} and taking $\varepsilon=\frac{1}{2}$, we conclude that
\begin{align*}
I_1&\lesssim \Big(\mu(A_m)+O(\gamma^m)\Big)\psi^\delta(n) +\gamma^{n-m} \psi^\delta(n).
\end{align*}

\medskip

\subsection*{Case (ii) $r> \psi(n)$}  We replace the ball $B(z,r)$ by a collection of balls of radius $\psi(n)$. To achieve this, choose a maximal $\psi(n)$-separated points in $B(z,r)$, denoted by $\{z_i\}_{1\le i\le p_{m,n}}$. Then it is clear that
\begin{equation*}\label{f5}
B(z,r)\subset \bigcup_{i=1}^{p_{m,n}} B(z_i, \psi(n)), \ {\text{and}}\ \bigcup_{i=1}^{p_{m,n}} B(z_i, \psi(n))\subset B(z,2r).
\end{equation*}
By the Ahlfors regularity of $\mu$, a volume argument implies that $$
p_{m,n}\asymp \left(\frac{r}{\psi(n)}\right)^{\delta}\asymp \left(\frac{K_{J_m}^{-1}\psi(m)}{\psi(n)}\right)^{\delta}.
$$

Now for each ball $B(z_i, \psi(n))$ with $1\leq i\leq p_{m,n}$, we have
\begin{eqnarray*}
\mu\Big(B(z_i, \psi(n))\cap A_n\Big)&\leq& \mu\bigg(B\Big(z_i, \psi(n)\Big)\cap T^{-n}B\Big(z_i,2\psi(n)\Big)\bigg)\\
&\leq & \Big[\mu\Big(B(z_i, \psi(n))\Big)+O(\gamma^n)\Big]\mu\Big(B(z_i, 2\psi(n))\Big)\\
&\lesssim& \Big[\psi^{\delta}(n)+O(\gamma^n)\Big]\psi^\delta(n).
\end{eqnarray*}
Finally, summing over all $1\leq i\leq p_{m,n}$, we have
\begin{align*}\mu(J_m^*\cap A_n)&\leq\sum_{i=1}^{p_{m,n}}\mu\Big(B(z_i, \psi(n))\cap A_n\Big)\lesssim \Big[\psi^\delta(n)+O(\gamma^n)\Big]\psi^\delta(m)K_{J_m}^{-\delta}.\end{align*}

Therefore,
\begin{eqnarray*}
I_2:=\sum_{J_m\in\mathcal{F}_m \atop r> \psi(n)}\mu(J_m^*\cap A_n)&\lesssim&
\Big[\psi^\delta(n)+O(\gamma^n)\Big]\psi^\delta(m)\cdot \sum_{J_m\in \mathcal{F}_m}K_{J_m}^{-\delta}\\
&\lesssim& \psi^\delta(m)\psi^\delta(n)+O(\gamma^n)\psi^\delta(m).
\end{eqnarray*}

Hence,
\begin{eqnarray*}
\mu(A_m\cap A_n)&=&\sum_{J_m\in\mathcal{F}_m}\mu(J_m^*\cap A_n) =I_1+I_2\\
&\lesssim& \psi^\delta(m)\psi^\delta(n)+\gamma^{n-m} \psi^\delta(n)+O(\gamma^n)\psi^\delta(m).
\end{eqnarray*}
\end{proof}

\subsection{Completing the proof of Theorem \ref{maintheorem}} There are two parts of the proof: the convergence part and the divergence part. The convergence part, however, is a straightforward application of the first Borel-Cantelli lemma and Proposition \ref{prop1} by noting that $$ \sum_{n=1}^\infty \psi^\delta(n)<\infty\Longrightarrow \sum_{n=1}^\infty\mu(A_n)<\infty.$$



The main ingredient in proving the divergence part is the  usage of well-known Paley-Zigmund inequality which enables us conclude the positiveness of $\mu(\limsup A_n)$. Then by a technical way, we conclude the full measure property.

 \subsubsection{Positive measure}

 Let $N\in\mathbb{N}$ and $Z_N(x)=\sum_{n=1}^N\chi_{A_n}(x)$, where $\chi$ is the characteristic function. We first estimate the lower bound for the first moment and then the upper bound for the second moment of the random variable $Z_N$.

\begin{itemize}\item The first moment. By Lemma \ref{estimatemeasures} and choose $\varepsilon=\frac{1}{2}$, for $N$ sufficiently large, one has
\begin{align*}
\mathbb{E}(Z_N)&=\sum_{n=1}^n\mu(A_n)\ge \sum_{n=1}^N\Big(C_4\psi^\delta(n)-C_5\gamma^n\varepsilon^{-\delta}\Big)\\
&\ge C_4\sum_{n=1}^N\psi(n)^{\delta}-C_5'\ge \frac{C_4}{2}\sum_{n=1}^N\psi(n)^{\delta}
\end{align*} where for the second inequality, we used the divergence of $\sum_{n\ge 1}\psi(n)^{\delta}$.

\item The second moment.
\begin{align*}\mathbb{E}(Z_N^2)&=\mathbb{E}\left(\sum_{n=1}^N\chi_{A_n}+2\sum_{1\leq m<n\leq N}\chi_{A_m}\chi_{A_n}\right)\\ &=\sum_{n=1}^N\mu(A_n)+2\sum_{1\leq m< n\leq N}\mu(A_m\cap A_n).\end{align*}

Summing over $m,n$ $(1\leq m<n\leq N)$ in Proposition \ref{secondmomentestimate} gives

\[
\sum_{1\leq m<n\leq N} \mu(A_m\cap A_n) \lesssim \left(\sum_{1\leq n\leq N} \psi^\delta(n)\right)^2 + \sum_{1\leq n\leq N}\psi^\delta(n).\]
Therefore,
\begin{align*}\mathbb{E}(Z_N^2)&=\sum_{n=1}^N\mu(A_n)+\sum_{1\leq m< n\leq N}\mu(A_m\cap A_n)
\\
&\leq  C\left(\sum_{1\leq n\leq N} \psi^\delta(n)\right)^2 + (1+C)\sum_{1\leq n\leq N} \psi^\delta(n).\end{align*}
\end{itemize}

By the Paley-Zygmund inequality, for any $\lambda>0$, we obtain
\begin{align*}\mu\Big(Z_N>\lambda\mathbb{E}(Z_N)\Big)&\geq (1-\lambda)^2\frac{\mathbb{E}(Z_N)^2}{\mathbb{E}(Z_N^2)}\\
&\geq (1-\lambda)^2\frac{\left(\sum_{1\leq n\leq N}\eta_1\psi^\delta(n)\right)^2}{C\left(\sum_{1\leq n\leq N} \psi^\delta(n)\right)^2 + (1+C)\sum_{1\leq n\leq N} \psi^\delta(n)}.\end{align*}
Letting $N\to\infty$ 
we get $$\mu\Big(\limsup A_n\Big) \geq \mu\Big(\limsup(Z_N>\lambda\mathbb{E}(Z_N))\Big)\geq \limsup\mu\Big(Z_N>\lambda\mathbb{E}(Z_N)\Big)>0.$$

%
%
%
%
%

\subsubsection{Full measure}
Consider a subset of $X$:
$$R'(\psi)=\{x\in X: \liminf\limits_{n\to\infty}\psi^{-1}(n)|T^nx-x|<\infty\}.$$
We check that the set $R'(\psi)$ is invariant in the sense that \begin{align}\label{5}
\mu\Big(R'(\psi)\setminus T^{-1}R'(\psi)\Big)=0.
\end{align} More precisely, take a point $x\in R'(\psi)\cap (\cup_{i\ge 1}X_i)$. Let $i\ge 1$, $c(x)>0$ and $\{n_k\}_{k\ge 1}\subset \N$ be such that $$x\in X_i, \ {\text{and}}\ |T^{n_k}x-x|<c(x)\cdot \psi(n_k), \ {\text{for all}}\ k\ge 1.$$ Since $X_i$ is open, then for all $k$ large, $T^{n_k}x\in J_i$ two. So, for each $k\ge 1$ large
\begin{align*}
|T^{n_k}(Tx)-Tx|&=|T(T^{n_k}x)-T(x)| \\ &\le K \|D_x(T)\|\cdot |T^{n_k}x-x| <\tilde{c}(x)\cdot \psi(n_k).
\end{align*} This means that $$
R'(\psi)\cap (\cup_{i\ge 1}X_i) \subset T^{-1}R'(\psi)
$$ which proves (\ref{5}) since $$
\mu(X\setminus \cup_{i\ge 1}X_i)=0.
$$

It is clear that $R(\psi)\subset R'(\psi)$. The exponential mixing property ({\bf Condition II}) implies that $T$ is ergodic. Thus, together with the invariance of $R'(\psi)$,
we have shown that \begin{equation}\label{f8}\sum_{n\ge 1}\psi(n)^{\delta}=\infty\Longrightarrow \mu(R(\psi))>0\Longrightarrow \mu(R'(\psi))>0\Longrightarrow \mu(R'(\psi))=1.\end{equation}

Next we show that $\mu(R(\psi))=1$.  Take a sequence of positive numbers $\{\ell(n): n\ge 1\}$ such that $$
\sum_{n=1}^{\infty}\left(\frac{\psi(n)}{\ell(n)}\right)^\delta=\infty, \ \ \lim_{n\to \infty}\ell(n)=\infty.
$$ Applying (\ref{f8}) to $\widetilde{\psi}(n)=\psi(n)/\ell(n)$, we have that for $\mu$-almost all $x\in X$, $$
\liminf_{n\to \infty}\frac{\ell(n)}{\psi(n)}d(T^nx,x)<\infty.
$$ By Egorov's theorem, for any $\epsilon>0$, there exists $M>0$ such that the set $$
R_M=\Big\{x\in X: \frac{\ell(n)}{\psi(n)}d(T^nx, x)<M, \ {\text{for i.m.}}\ n\in \N\}
$$ is of measure at least $1-\epsilon$. It is clear that $$
R_M\subset R(\psi), \ \ {\text{since}}\ \ell(n)>M, \ {\text{for large}}  \ n\in \N.
$$ Since  $\epsilon$ is arbitrary, we conclude that $$
\mu(R(\psi))=1.
$$
%

\section{Applications} 
In this section we present some applications of Theorem \ref{maintheorem}. There may be many more applications but we have restricted ourselves to some well-known examples. In particular, Theorems \ref{betathm} and \ref{CFrecurrence} given below are new and never appeared in the literature before.  These two theorems gives the dichotomy laws for the Lebesgue measures of the recurrence sets in the $\beta$-dynamical systems and the dynamical systems of continued fractions respectively. In contrast, the Dynamical Borel-Cantelli lemma for the shrinking target problems was studied over fifty years ago by Philipp \cite{Phi}, where he considered the dynamics of $N$-adic expansion, $\beta$-expansion, and continued fractions. 

\subsection{$\beta$-dynamical system}
 For a real number $\beta>1$, define the transformation $T_\beta:[0,1]\to[0,1]$ by $$T_\beta: x\mapsto \beta x\bmod 1.$$
 This map generates the $\beta$-dynamical system $([0,1], T_\beta)$. It is well known that $\beta$-expansion is a typical
 example of an expanding non-finite Markov system whose properties are reflected by the orbit of some critical point.
 General $\beta$-expansions have been widely studied in the literature, beginning with the pioneering works of Renyi \cite{Re57},
 Parry \cite{Pa60}, Schmeling \cite{Schme}, and Tan-Wang \cite{TaW} etc. 

For this application we first check that the $\beta$-dynamical system satisfies all the  conditions stated in our framework.

 \begin{enumerate}

 \item Partition: $$
 X_i=\Big(\frac{i-1}{\beta}, \frac{i}{\beta}\Big); \ {\text{and}}\ \ 1\le i\le \lfloor \beta\rfloor, \ \ X_{\lfloor \beta\rfloor+1}=\Big(\frac{\lfloor \beta\rfloor}{\beta}, 1\Big).
 $$

 \item Ahlfors regularity of the measure. Let $\mu$ be the Parry measure which is equivalent to the Lebesgue measure $\LL$ with the density $$
h(x)={\left(\int_{0}^1 \sum_{n: T^n1<x}\frac{1}{\beta^n}dx\right)^{-1} \sum_{n: T^n1<x}}\frac{1}{\beta^n}.
$$


\item Strong mixing property is due to Philipp \cite{Phi}.
\item  Bounded distortion. Restricted to a cylinder $J_n$ of order $n$, $T^n_{\beta}$ is a linear map with slope $\beta^n$.

\item  $$
\sum_{J_n\in \mathcal{F}_n}\Big(K_{J_n}\Big)^{-\delta}=\sum_{J_n\in \mathcal{F}_n}\beta^{-n}=\beta^{-n}\cdot \# \mathcal{F}_n\le \frac{\beta}{\beta-1},
$$
where the inequality follows from the fact that  $\beta^n\leq\# \mathcal{F}_n\leq \frac{\beta^{n+1}}{\beta-1}$, see \cite{Re57}.
\end{enumerate}

 Hence all the conditions in the main theorem are fulfilled for $\beta$-dynamical system.  Thus,  as an application of our theorem, we are able to give a complete Lebesgue measure of the recurrence set
 \begin{equation*}\label{RTbP}R(T_{\beta}, \psi):=\left\{x\in [0,1]:|T_\beta^nx-x|<\psi(n)\ \mbox{ for i.m.}\ n\in \N\right\},\end{equation*}
 in the $\beta$-dynamical system.
\begin{theorem}\label{betathm}
Let $\mu$ be the Parry measure. Then
\begin{equation*}
\mu(R(T_\beta, \psi))=\left\{\begin{array}{cl}
0& {\rm if} \quad  \sum_{n=1}^\infty \psi(n)<\infty,\\[2ex]
1& {\rm if} \quad  \sum_{n=1}^\infty \psi(n)=\infty.
 \end{array}\right.
\end{equation*}
\end{theorem}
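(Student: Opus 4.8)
The plan is to read off Theorem \ref{betathm} as a direct instance of the main theorem, Theorem \ref{maintheorem}. The five items preceding the statement already verify Conditions (I)--(V) for the Parry measure $\mu$ on the $\beta$-dynamical system $([0,1], T_\beta)$, so the only thing left to do is to identify the Ahlfors dimension $\delta$ and then substitute it into the conclusion of the main theorem.

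First I would pin down $\delta = 1$. Write $d\mu = h\, d\LL$ for the Parry density $h$. The one genuinely analytic input is the classical two-sided bound on $h$: there exist constants $0 < c_1 \le c_2 < \infty$ with $c_1 \le h(x) \le c_2$ for all $x \in [0,1]$, which follows from the work of R\'enyi and Parry since the defining sum $\sum_n \beta^{-n}$ lies between $1$ and $\tfrac{\beta}{\beta-1}$. Consequently $\mu(B(x,r)) \asymp \LL(B(x,r)) \asymp r$ for all small $r$, which is exactly Condition (I) with exponent $\delta = 1$. This is the step that uses the structure of the Parry measure; the remaining conditions are essentially free.

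Indeed, since $T_\beta^n$ restricted to any cylinder $J_n \in \mathcal{F}_n$ is affine with slope $\beta^n$, one has $K_{J_n} = \beta^n$, the bounded-distortion Condition (III) holds with $K = 1$, and the conformality Condition (V) is immediate because an affine map in dimension one sends each interval to an interval of comparable radius. The mixing Condition (II) is Philipp's theorem, and Condition (IV) follows from the R\'enyi counting estimate $\beta^n \le \#\mathcal{F}_n \le \tfrac{\beta^{n+1}}{\beta - 1}$, giving $\sum_{J_n \in \mathcal{F}_n} K_{J_n}^{-\delta} = \beta^{-n}\,\#\mathcal{F}_n \le \tfrac{\beta}{\beta-1}$ uniformly in $n$.

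With $\delta = 1$ established, the final step is purely formal: $\psi^\delta(n) = \psi(n)$, so the governing series $\sum_{n\ge 1}\psi^\delta(n)$ of Theorem \ref{maintheorem} coincides with $\sum_{n\ge 1}\psi(n)$, and the zero-full law of the main theorem produces the stated dichotomy for $\mu(R(T_\beta,\psi))$. I do not expect any serious obstacle: the only point requiring care is the two-sided bound on the Parry density that fixes the Ahlfors exponent at $\delta = 1$, and this is classical; everything else is a direct consequence of the affine action of $T_\beta$ on cylinders together with the already-cited mixing property.
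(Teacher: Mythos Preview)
Your proposal is correct and follows essentially the same route as the paper: verify Conditions (I)--(V) for the $\beta$-dynamical system and then invoke Theorem~\ref{maintheorem} with $\delta=1$. The paper does precisely this, listing the five verifications immediately before the statement and leaving the substitution $\psi^\delta(n)=\psi(n)$ implicit; your write-up is if anything slightly more explicit about why the Parry density bound forces $\delta=1$.
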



%

\begin{remark}\label{remapp1}
 It should be noted that the $\beta$-dynamical system for a general $\beta>1$ is neither a self-similar set nor a finite conformal iterated function system with open set condition. So the results of Baker-Farmer \cite{Baker} and Chang-Wu-Wu \cite{CWW2019} are not applicable to the beta dynamical systems. Their results are not applicable to  the systems of continued fractions or the dynamical systems generated by Gauss maps either as stated below.
\end{remark}



\subsection{Continued fraction dynamical system}
Let $T_G$ be the Gauss map on $[0, 1)$.
It was shown by Philipp \cite{Phi}, the system $([0, 1), T_G)$ is exponentially mixing with respect to the Gauss measure $\mu$ given by $d\mu=dx/(1+x)\log 2$.  Since the Gauss measure $\mu$ is equivalent to the Lebesgue measure ($\mathcal L$), {\bf Condition I} is satisfied with $\delta=1$. For any irrational $x\in [0, 1)$,
\begin{eqnarray}\label{derivative}
q_n^2(x)\leq |(T_G^n(x))'|\leq 4q_n^2(x),
\end{eqnarray}
where $q_n(x)$ is the denominator of the $n$-th convergent of the continued fraction expansion of $x$.
It follows that given any cylinder $I(a_1,a_2,\cdots, a_n)$ with $a_1,\dots,a_n\in\mathbb{N}$, for any $x,y\in I(a_1,a_2,\cdots, a_n)$,
$$\frac{1}{4}\leq\frac{|(T_G^n(x))'|}{|(T_G^n(y))'|}\leq 4.$$
So {\bf Condition III} also holds.

For any $J_n=I(a_1,a_2,\cdots, a_n)\in\mathcal{F}_n$,
\begin{eqnarray}\label{KJn}
q_n^2(a_1,\dots,a_n)\leq K_{J_n}=\inf_{x\in J_n}|(T_G^n(x))'|\leq 4q_n^2(a_1,\dots,a_n).
\end{eqnarray}
Note that
$$\frac{1}{2q_n^2}\leq |I(a_1,a_2,\cdots, a_n)|=\frac{1}{q_n(q_n+q_{n-1})}\leq \frac{1}{q_n^2},$$
we have
$$\sum_{J_n\in\mathcal{F}_n}K_{J_n}^{-1}\leq \sum_{J_n\in\mathcal{F}_n}q_n^{-2}(a_1,\dots,a_n)\leq \sum_{J_n\in\mathcal{F}_n}2|I(a_1,a_2,\cdots, a_n)|\leq 2.$$
That is, {\bf Condition IV} is satisfied. Since $T_G|_{J_n}$ is monotonic and $C^1$, combining \eqref{derivative} and \eqref{KJn} gives that {\bf Condition V} holds with $C=4$.

Define the recurrence set as
$$R(T_G, \psi)=\{x\in [0, 1): |T_G^nx-x|<\psi(n)\ \text{for i.m. }  n\in \N\}.$$

Thus we can apply Theorem \ref{maintheorem} to this set.

\begin{theorem}\label{CFrecurrence}
Let  $\psi$ be a positive function and $T_G$ be the Gauss transformation. Then
\begin{equation*}
\mathcal{L}(R(T_G, \psi))=\left\{\begin{array}{cl}
0& {\rm if} \quad  \sum_{n=1}^\infty \psi(n)<\infty,\\[2ex]
1& {\rm if} \quad  \sum_{n=1}^\infty \psi(n)=\infty.
 \end{array}\right.
\end{equation*}

\end{theorem}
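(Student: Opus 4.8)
The plan is to obtain Theorem \ref{CFrecurrence} as a direct application of Theorem \ref{maintheorem} to the Gauss system $([0,1), T_G)$ equipped with the Gauss measure $\mu$. The discussion preceding the theorem has already verified that $\mu$ satisfies all five conditions (I)--(V): Condition I holds with dimension $\delta=1$ because $\mu$ is comparable to Lebesgue measure; Condition II is Philipp's exponential mixing estimate; Conditions III and V follow from the distortion bounds \eqref{derivative} together with the comparison \eqref{KJn} between $K_{J_n}$ and $q_n^2$; and Condition IV is the convergent estimate $\sum_{J_n\in\mathcal{F}_n} K_{J_n}^{-1}\le 2$. With these hypotheses in hand, I would simply invoke Theorem \ref{maintheorem} to obtain the zero--full law for $\mu(R(T_G,\psi))$ governed by the series $\sum_n \psi^\delta(n)=\sum_n\psi(n)$, the last equality being forced by $\delta=1$.

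The only additional step is to transfer the conclusion from the Gauss measure $\mu$ to Lebesgue measure $\mathcal{L}$. Here I would observe that the Gauss density $h(x)=1/((1+x)\log 2)$ is bounded above and below by strictly positive constants on $[0,1)$, so $\mu$ and $\mathcal{L}$ are mutually absolutely continuous with Radon--Nikodym derivatives bounded away from $0$ and $\infty$. Consequently $\mu$ and $\mathcal{L}$ possess exactly the same null sets and the same full-measure sets, and the dichotomy $\mu(R(T_G,\psi))\in\{0,1\}$ transfers verbatim to $\mathcal{L}(R(T_G,\psi))\in\{0,1\}$ according to the same convergence/divergence criterion.

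I do not expect a genuine obstacle beyond the bookkeeping already carried out: the substantive content of the argument---namely, handling the countably infinite branch structure of the Gauss map and the resulting superposition of error terms---lives entirely inside the proof of Theorem \ref{maintheorem}, and in particular inside the crucial summability Condition IV that renders the second-moment estimate of Proposition \ref{secondmomentestimate} usable. If I were to single out a delicate point, it would merely be confirming that $\delta=1$ is indeed the dimension forced by the Ahlfors regularity of the Gauss measure on the line, so that $\psi^\delta=\psi$ and the series appearing in the statement is the correct one.
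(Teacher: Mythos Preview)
Your proposal is correct and follows essentially the same approach as the paper: verify Conditions (I)--(V) for the Gauss system with the Gauss measure and $\delta=1$, then apply Theorem~\ref{maintheorem}. You even make explicit the passage from the Gauss measure $\mu$ to Lebesgue measure $\mathcal{L}$ via their mutual absolute continuity, a step the paper leaves implicit.
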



 \subsection{Homogeneous self-similar sets}

Our result is applicable to a range of self-similar sets but here we demonstrate it for the classical middle-third Cantor set $\mathcal{K}$. Let $T_3$ be the $3$-adic transformation on $K$, $\mu$ the Cantor measure restricted on $\mathcal{K}$, $\delta=\log_3 2$. Then all the conditions are fulfilled for Theorem \ref{maintheorem}.  Let
\[R(T_3, \psi)=\left\{x\in K: |T_3^nx-x|<\psi(n) \ {\text{ for i.m.}}
\ n\in \N\right\}.\]
We have
\begin{theorem}
Let  $\psi$ be a positive function. Then
\begin{equation*}
\mu(R(T_3, \psi))=\left\{\begin{array}{cl}
0& {\rm if} \quad  \sum_{n=1}^\infty \psi(n)^\delta<\infty,\\[2ex]
1& {\rm if} \quad  \sum_{n=1}^\infty \psi(n)^\delta=\infty.
 \end{array}\right.
\end{equation*}
\end{theorem}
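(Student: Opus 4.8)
The plan is to treat this exactly as the two preceding applications: since Theorem~\ref{maintheorem} already furnishes the zero--full dichotomy once Conditions (I)--(V) are known, it suffices to verify those five conditions for the triple $(\mathcal K,\mu,T_3)$ with $\delta=\log_3 2$. First I would fix the symbolic coding. The set $\mathcal K$ is the attractor of the two similarities $x\mapsto x/3$ and $x\mapsto x/3+2/3$, so the natural partition is $X_0=[0,1/3]\cap\mathcal K$, $X_1=[2/3,1]\cap\mathcal K$, and $T_3(x)=3x\bmod 1$ maps each $X_i$ affinely onto all of $\mathcal K$. A cylinder $J_n\in\mathcal F_n$ is then one of the $2^n$ basic intervals of the $n$-th level of the construction, of diameter $3^{-n}$, and $\mu$ is the self-similar (Bernoulli) measure assigning mass $2^{-n}$ to each such cylinder; under this coding the system is isomorphic to the one-sided $(1/2,1/2)$ Bernoulli shift.

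With this description the conditions governed by the affine self-similar structure are immediate. Condition~I is the classical statement that the Cantor measure is Ahlfors regular of dimension $\delta=\log_3 2$: a ball $B(x_0,r)$ centred on $\mathcal K$ meets boundedly many cylinders of order $\lfloor\log_3(1/r)\rfloor$, so $\mu(B(x_0,r))\asymp r^\delta$. Since $T_3^n$ is \emph{affine} with constant slope $3^n$ on every $J_n\in\mathcal F_n$, the distortion ratio in Condition~III is identically $1$ (so $K=1$), and $T_3^n$ carries each ball contained in $J_n$ onto a ball of exactly $3^n$ times the radius, giving Condition~V with $C=1$ and $K_{J_n}=3^n$. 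Condition~IV is then a one-line computation,
\[
\sum_{J_n\in\mathcal F_n} K_{J_n}^{-\delta}=2^n\cdot(3^n)^{-\delta}=2^n\cdot 3^{-n\delta}=2^n\cdot 2^{-n}=1\qquad(n\in\N),
\]
using $3^{-\delta}=3^{-\log_3 2}=\tfrac12$; in particular the bound is uniform in $n$.

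The one condition that demands a genuine argument, and the step I expect to be the main obstacle, is Condition~II (exponential mixing), because a ball $E=B(x_0,r)$ is not a union of cylinders and the naive sandwiching between cylinders of order $\asymp\log_3(1/r)$ produces a boundary error that does not decay in $n$. The remedy is to use the Bernoulli structure at the \emph{matched} scale: for an order-$n$ cylinder $J$ (measurable with respect to the first $n$ symbols) and any measurable $F$, the set $T_3^{-n}F$ is measurable with respect to the symbols beyond position $n$, so independence together with $T_3$-invariance gives $\mu(J\cap T_3^{-n}F)=\mu(J)\mu(F)$ exactly. Sandwiching $E\cap\mathcal K$ between the order-$n$ cylinders it contains and the order-$n$ cylinders it meets, and noting that such a cylinder can fail to lie inside $E$ only if it contains one of the two endpoints $x_0\pm r$, one sees that there are at most two ``boundary'' cylinders, each of measure $2^{-n}$. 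Hence
\[
\bigl|\mu(E\cap T_3^{-n}F)-\mu(E)\mu(F)\bigr|\le 2\cdot 2^{-n}\,\mu(F),
\]
so Condition~II holds with $C=2$ and $\gamma=\tfrac12=3^{-\delta}$.

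Having verified (I)--(V), Theorem~\ref{maintheorem} applies verbatim with $\delta=\log_3 2$ and yields
\[
\mu(R(T_3,\psi))=\begin{cases}0,&\sum_{n\ge1}\psi(n)^\delta<\infty,\\[1ex]1,&\sum_{n\ge1}\psi(n)^\delta=\infty,\end{cases}
\]
which is the assertion. I emphasise that, apart from the matched-scale bookkeeping in the mixing step, every verification is either standard (Condition~I) or an exact identity coming from the linearity of $T_3^n$ on cylinders (Conditions~III, IV, V), so the only place where care is genuinely needed is reconciling the ball $E$ in Condition~II with the cylinder algebra on which independence is transparent.
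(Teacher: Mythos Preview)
Your proposal is correct and follows exactly the approach the paper takes: verify Conditions (I)--(V) for $(\mathcal K,\mu,T_3)$ with $\delta=\log_3 2$ and then invoke Theorem~\ref{maintheorem}. In fact you supply considerably more detail than the paper, which merely asserts that ``all the conditions are fulfilled''; in particular, your matched-scale sandwiching argument for Condition~II (bounding the ball $E$ between unions of order-$n$ cylinders with at most two boundary cylinders of mass $2^{-n}$) is a clean and explicit justification that the paper omits.
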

%


\medskip

\medskip

\def\cprime{$'$} \def\cprime{$'$} \def\cprime{$'$} \def\cprime{$'$}
  \def\cprime{$'$}

\end{document}